\newtheorem{theorem}{Theorem}
\newtheorem{lemma}{Lemma}
\newtheorem{definition}{Definition}
\newtheorem{proposition}{Proposition}
\def\bne{\begin{equation}}
\def\ene{\end{equation}}
\def\bnea{\begin{eqnarray}}
\def\enea{\end{eqnarray}}
\def\bpi{\boldsymbol{\pi}}
\def\bzero{\boldsymbol{0}}
\def\Section#1{Section~\ref{#1}} 
\def\Sections#1#2{Sections~\ref{#1}--\ref{#2}}
\title{Cycle bases of reduced powers of graphs}
\author[a]{\small Richard H.~Hammack}
\author[b]{\small Gregory D.~Smith}
\affil[a]{\small Department of Mathematics and Applied Mathematics,

Virginia Commonwealth University,
Richmond, Virginia, USA, 

rhammack@vcu.edu}    
\affil[b]{\small Department of Applied Science,

The College of William \& Mary,
Williamsburg, VA, USA, 

greg@wm.edu}
\begin{document}
\maketitle 

\begin{abstract}
We define what appears to be a new construction. Given a graph $G$ and
a positive integer $k$, the {\it reduced $k$th power of} $G$, denoted $G^{(k)}$, is the configuration space in which
$k$ indistinguishable tokens are placed on the vertices of $G$, so that any vertex can hold
up to $k$ tokens. 
Two configurations are adjacent if one can be transformed to the other by moving a single
token along an edge to an adjacent vertex. 
We present propositions related to the structural properties of reduced graph powers and, most significantly, provide a construction of  minimum cycle bases of $G^{(k)}$.   

The minimum cycle basis construction is an interesting combinatorial
problem  that is also useful in applications involving configuration spaces. For example, if $G$ is the state-transition graph of a Markov chain model of a stochastic automaton, 
the reduced power $G^{(k)}$ is the state-transition graph for $k$ identical (but not necessarily independent)  automata.   We show how the minimum cycle basis construction  
of $G^{(k)}$ may be used to confirm that state-dependent coupling of automata
does not violate the principle of microscopic reversibility, as required in physical and chemical applications.
\end{abstract}

\noindent Keywords: graph products; Markov chains; cycle spaces.           

\noindent Math.~Subj.~Class.: 05C76, 60J27

\section{Introduction}
\label{SECTION:INTRODUCTION}

Time-homogenous Markov chains \cite{Norris97} are used as a mathematical formalism in applications as diverse as computer systems performance analysis \cite{PlateauFourneau91}, queuing theory in operations research \cite{Neuts89}, simulation and analysis of stochastic chemical kinetics \cite{Gillespie07}, and biophysical modeling of  ion channel gating \cite{ColquhounHawkes95}.   

Many properties of a Markov chain, such its rate of mixing and its steady-state probability distribution, can be numerically calculated using its transition matrix \cite{Stewart94}.
A continuous-time Markov chain $X(t)$ ($t\geq 0$) with a finite number of states $\{1,\ldots, \eta \}$ is defined by an initial probability distribution, $\pi_i(0) = \Pr\{ X(0)=i\}$, and a transition matrix  
$Q=(q_{ij})$ where $1 \leq i,j \leq \eta$, $q_{ij} \geq 0$ for $i\neq j$ and $q_{ii}=-\sum_{j\neq i} q_{ij}$, so called because, for $i\neq j$, 
$q_{ij} =  \lim_{dt \rightarrow 0 }\Pr \{ X(t+dt)=j | X(t)= i \} / dt$.  The requirement that $Q$ has zero row sums, $\sum_{j} q_{ij} = 0$, corresponds to 
conservation of probability, $\sum_i \pi_i(t) = 1$, in the ordinary differential equation initial value problem, $d\bpi/dt=\bpi Q$ with initial condition $\bpi(0)$, solved by the time-dependent discrete probability distribution $\bpi(t) = ( \pi_1(t), \ldots, \pi_\eta(t) )$ where $\pi_i(t) = \Pr\{ X(t)=i\}$.

A CTMC with a single communicating class of $\eta < \infty$ states is irreducible, positive recurrent, and has a unique steady-state probability distribution that solves 
$\bar{\bpi} Q = \bzero$ subject to $\sum_i \bar{\pi}_i = 1$ (by the Perron-Frobenius theorem).
The Perron vector and steady-state distribution $\bar{\bpi}$ is the limiting probability distribution of the Markov chain, $\lim_{t\rightarrow \infty} \| \bpi(t) - \bar{\bpi} \| = 0$, for any initial condition satisfying conservation of probability, $\sum_i \pi_i(0) = 1$.  In general, the calculation of steady-state distributions and other properties for  Markov chains with $\eta$ states requires algorithms of $\mathcal{O}(\eta^3)$ complexity.
 
Many open questions in the physical and biological sciences involve the analysis of systems that are naturally modeled as a collection of interacting stochastic automata \cite{Liggett99,BallEtal00,RichouxVerghese11}.  Unfortunately, representing a 
stochastic automata network as a single {\it master} Markov chain 
suffers from the computational limitation that the aggregate number of states is exponential in the number of components.  For example, the transition matrix for $k$ coupled stochastic automata, each of which can be represented by an $v$-state Markov chain, has $\eta = v^k$ states and requires algorithms of  $\mathcal{O}(v^{3k})$ complexity.

Many results are relevant to overcoming combinatorial state-space explosions of coupled stochastic automata.  For example,  memory-efficient numerical methods may use ordinary Kronecker representations of the master transition matrix $Q = \sum_\ell \bigotimes_{n=1}^k R_{\ell n}$ where the $R_{\ell n}$ are size $v$, and many are identity matrices, eliminating the need to generate and store the size $v^k$  transition matrix \cite{CamposEtal99}.
Kronecker representations may be generalized to allow for matrix operands whose entries are functions that describe state-dependent transition rates, i.e., $Q = \bigoplus_{n=1}^k F_n$ and $F _n(i,j) : \times_{n=1}^{k} X_n \rightarrow \mathbb{R}$ where $X_n(t)$ is the state of the $n$th automata \cite{BenoitEtal04a}.  
Hierarchical Markovian models may be derived in an automated manner  and leveraged by multi-level numerical methods \cite{Buchholz95}.

Redundancy in master Markov chains for interacting stochastic automata   can often be eliminated without approximation.  Both lumpability at the level of individual automata and model composition  have been extensively researched, though the latter 
 reduces  the state space in a manner that eliminates Kronecker structure  \cite{Buchholz94,BenoitEtal04b,GusakEtal03a}.  To see this, consider $k$ identical and indistinguishable stochastic automata, each with $v$ states, that interact via transition rates that are functions of the global state, that is, $Q = \bigoplus^k F$ where $F(i,j) : \times_{\ell=1}^{v} \,n_\ell \rightarrow \mathbb{R}$ where $n_\ell(t)$ = $\sum_{n=1}^k \mathcal{I}\{ X_n(t)=\ell\}$ is the number of automata in state $\ell$.  As defined $Q$, is size $v^k$, however,  states may be lumped using  symmetry in the model specification to yield an equivalent master Markov chain of size $\eta = \binom{k+v-1}{k}$.  Although model reduction in this spirit is intuitive and widely used in applications, the mathematical structure of the transition graphs resulting from such contractions does not appear to have been extensively studied.  

More concretely, let $G$ represent the transition graph for an $v$-state automaton with transition matrix $Q=(q_{ij})$.   As required in many applications, we assume that $Q$ is irreducible and that state transitions are reversible ($q_{ij}> 0 \Leftrightarrow q_{ji}>0$, $i \neq j$).  Thus, the transition graph $G$ corresponding to $Q$ is simple (unweighted, undirected, no loops or multiple edges) and connected (by the irreducibility of $Q$).  The transition graph $G$ has adjacency matrix $A(G)=(a_{ij})$ where $a_{ii}=0$, and for $i \neq j$, $a_{ij}=0$ when $q_{ij}=0$ and $a_{ij}=1$ when $q_{ij}>0$.   

The transition graph for the master Markov chain for $k$ automata with transition graphs $G_n$ is the Cartesian graph product $G_1\Box G_2\Box\cdots\Box G_k$.  If these $k$ automata are identical, 
the transition graph for the master Markov chain is the $k$th Cartesian power of $G$, that is, the $k$-fold product $G^k=G\Box G\Box\cdots\Box G$.   The focus of this paper is the \textbf{$\boldsymbol{k}$-th reduced  power} of~$G$, i.e., the transition graph of the contracted master Markov chain for $k$ indistinguishable (but not necessarily independent)  $v$-state automata with isomorphic transition graphs.

The remainder of this paper is organized as follows.
In \Sections{SECTION:REDUCED}{SECTION:CONFIGURATION} 
we formally define the reduced power of a graph and interpret it as particular configuration space.    \Sections{SECTION:MCB}{SECTION:HIGHER}  present our primary result, the construction of minimal cycle bases of reduced graph powers.  \Section{Sec:Discussion} explicates the relevance of these minimal cycle bases to applications that do not allow state-dependent coupling of  automata  to introduce nonequilibrium steady states. 

\section{Reduced Cartesian powers of a graph}
\label{SECTION:REDUCED}

There are several equivalent formulations of the reduced  power of a graph. For the first formulation, recall that 
given graphs $G$ and $H$,  their {\it Cartesian product}  is the graph $G\Box H$ whose
vertex set is the Cartesian product $V(G)\times V(H)$ of
the vertex sets of $G$ and $H$, and whose  edge set is
$$\begin{array}{lcl}
E(G\Box H) & = & \big\{ (x,u)(y,v) \mid xy\in E(G) \mbox{ and } u=v,\; \mbox{\bf or } \;x=y \mbox{ and }  uv\in E(H)\big\}.
\end{array}$$
This product is commutative and associative \cite{HammackEtal2011}.
For typographical efficiency we may abbreviate a vertex $(x,y)$ of
$G\Box H$ as $xy$ if there is no danger of confusion.

The $k$th {\em Cartesian power} of a graph $G$ is the $k$-fold product $G^k=G\Box G\Box\cdots\Box G$.
The symmetric group $S_k$ acts on $G^k$ by permuting the factors. Specifically, for a
permutation $\pi\in S_k$ the map
$$(x_1,x_2,\ldots, x_k)\mapsto (x_{\pi(1)},x_{\pi(2)},\ldots, x_{\pi(k)})$$
is an automorphism of $G^k$. The $k$th {\em reduced power} of $G$ is the graph that has as vertices
the orbits of this action, with two orbits being adjacent if $G^k$ has an edge joining one
orbit to the other. Said more succinctly, the reduced $k$th power is the quotient
$G^k/S_k$ of $G^k$ by its $S_k$ action.

Figure~\ref{Fig:Grid} shows a graph $G$ next to $G^2=G\Box G$. 
The $S_2$ action on $G^2$  has as orbits the singletons $\{aa\}$, $\{bb\}$, $\{cc\}$, $\{dd\}$,
along with the pairs $\{ab,ba\}$, $\{ac,ca\}$, $\{ad,da\}$, $\{bc,cb\}$, $\{bd,db\}$, and $\{cd,dc\}$.
Let us identify a singleton orbit such as $\{aa\}$ with the monomial $aa=a^2$, and a paired orbit
such as $\{ab,ba\}$ with the monomial $ab$ (with $ab=ba$). The reduced power $G^{(2)}$ appears on the right
of Figure~\ref{Fig:Grid}. Note that two monomials $xy$ and $uv$ are adjacent in $G^{(2)}$
provided that $xy$ and $uv$ have a common factor, and the remaining two factors are adjacent
vertices of $G$.

\begin{figure}[t!]
\centering
\includegraphics{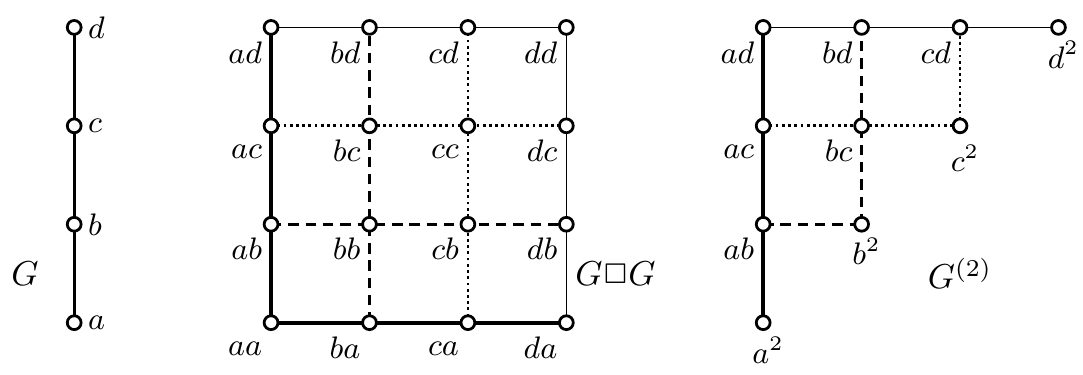}
\caption{{\small A graph $G$, the Cartesian square $G^2=G\Box G$,
and the reduced power $G^{(2)}$.
For each $x\in V(G)$, the vertices $\{xv\mid v\in V(G)\}$ induce a subgraph $Gx\cong G$ of $G^{(2)}$.
These subgraphs are shown dashed, dotted and solid in $G^{(2)}$.
Note $Gx$ and $Gy$ intersect precisely at  vertex $xy$ if $x\ne y$.}}
\label{Fig:Grid}
\end{figure}

As each monomial $xy$ corresponds uniquely to the $2$-multiset $\{x,y\}$ of vertices of~$G$,
we can also define the reduced power $G^{(2)}$ as follows. Its vertices are the
$2$-multisets of vertices of $G$, with two multisets being adjacent precisely if they agree in
one element, and the other elements are adjacent in $G$.

In general, higher reduced powers $G^{(k)}$ can be understood as follows.
Suppose $V(G)=\{a_1,a_2,\ldots,a_v\}$. Any vertex of $G^{(k)}$ is the
$S_k$-orbit of some $x=(x_1,x_2,\ldots,x_k)\in V(G^k)$.
For each index $1\leq i\leq v$, say $x$ has $n_i\geq 0$ coordinates equal to $a_i$.
Then $\sum_{i=1}^v n_i=k$, and the $S_k$-orbit of $x$ consists precisely of 
those $k$-tuples in $V(G^k)$ having $n_i$ coordinates equal to $a_i$, for $1\leq i\leq v$.
This orbit --- this vertex of $G^{(k)}$ --- can then be identified with either the degree-$k$ monomial
$$a_1^{n_1}a_2^{n_2}\cdots a_v^{n_v},$$
 or with the $k$-multiset
 \begin{equation}
 \{\,\underbrace{a_1,a_1,\ldots, a_1}_{n_1} \mid
 \underbrace{a_2,a_2,\ldots, a_2}_{n_2} \mid\ldots\; \ldots \mid
  \underbrace{a_v,a_v,\ldots, a_v}_{n_v}\,
 \},
 \label{MULTI}
 \end{equation}
where $v-1$ dividing bars are inserted for  clarity.
We will mostly use the monomial notation for $V(G^{(k)})$, but will
also employ the multiset phrasing when convenient.
Let us denote the set of monic monomials of degree~$k$, with indeterminates $V(G)$,
as $M_k(G)$, with $M_0(G)=\{1\}$.
The above, together with the
definition of the Cartesian product, yields the following.

\begin{definition}
For a graph $G$ with vertex set $\{a_1,a_2,\ldots,a_v\}$, the {\bf reduced  $\boldsymbol{k}$th power}
$G^{(k)}$ is the graph whose vertices are the monomials
$a_1^{n_1}a_2^{n_2}\cdots a_v^{n_v}\in M_k(G)$. For edges,
if $a_ia_j$ is an edge of $G$, and $f(a_1,a_2,\ldots a_v)\in M_{k-1}(G)$, then\
$a_i f(a_1,a_2,\ldots,a_v)$ is adjacent to $a_j f(a_1,a_2,\ldots, a_v)$.
\label{DEF:REDUCED}
\end{definition}

Figure~\ref{Fig:Three} shows the three-cycle $G=C_3$ and its
reduced  second and third powers. Figure~\ref{Fig:FiveCycle} shows the five-cycle and its
reduced second and third powers.  

The reduced power $G^{(k)}$ is not to be confused with the {\it symmetric power} of $G$,
for which each vertex represents a $k$-subset of $V(G)$, and two $k$-subsets are joined
if and only if their symmetric difference is an edge of $G$ \cite{AlzagaEtal10,AudenaertEtal07}.  

\begin{figure}[t!]
\centering
\includegraphics{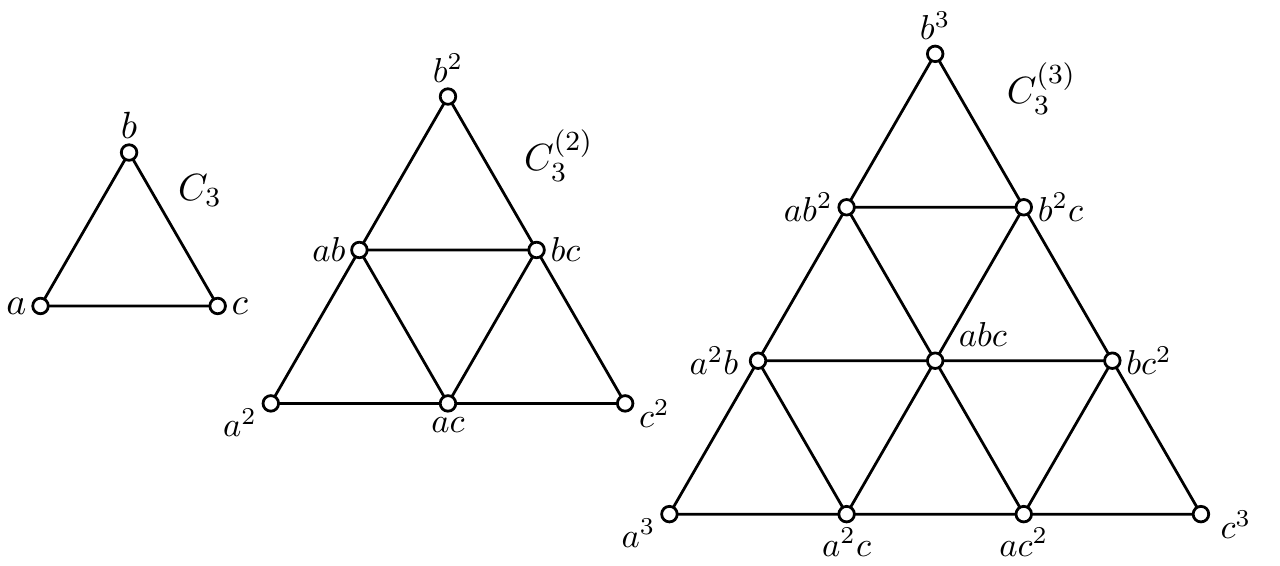}
\caption{The three-cycle $C_3$ and its second and third reduced powers $C_3^{(2)}$ and $C_3^{(3)}$.}
\label{Fig:Three}
\end{figure}

The multiset notation~(\ref{MULTI}) gives a quick formula for the number of vertices of 
reduced $k$th powers.
This presentation describes the multiset as a list of length $k+v-1$ involving $k$ symbols $a_i$, $1 \leq i \leq k$,
and $v-1$ separating bars. 
We can count the multisets by choosing $k$ slots for the $a_i$'s and filling in the remaining slots
with bars. Therefore, when $|V(G)|=v$,
\begin{equation}
\left| V\big(G^{(k)} \big)\right| = \binom{k+v-1}{k}.
\label{EQN:VERTICES}
\end{equation}
The number of vertices in $G^k$ that are identified with vertex $a_1^{n_1}a_2^{n_2}\cdots a_v^{n_v} \in V(G^{(k)})$ in the quotient $G^{(k)}=G^k/S_k$ is given by the multinomial coefficient $\binom{k}{n_1\; n_2\;\ldots\;n_v}$ .

Definition~\ref{DEF:REDUCED} says that for each edge $a_ia_j$ of $G$,
and for each monomial $f\in M_{k-1}(G)$, there is an
edge of $G^{(k)}$  from $a_i f$ to $a_j f$.
Because there are $\binom{k+m-2}{k-1}$ such monomials $f$,
\begin{equation}
\left| E\big(G^{(k)} \big)\right| =|E(G)|\cdot \binom{k+v-2}{k-1}.
\label{EQN:EDGES}
\end{equation}

\section{Reduced graph powers as configuration spaces}
\label{SECTION:CONFIGURATION}

The reduced power $G^{(k)}$ 
is the transition graph of the contracted master Markov chain for  $k$ identical and indistinguishable $v$-state automata, each with transition graph $G$.
Consequently, an intuitive way of envisioning $G^{(k)}$ is to imagine it as a configuration space in which
$k$ indistinguishable tokens are placed on the vertices of $G$, so that any vertex can hold
up to $k$ tokens. The monomial $a_1^{n_1}a_2^{n_2}\cdots a_v^{n_v}$ then
represents the configuration in which $n_i$ tokens are placed on each vertex $a_i$.
Two configurations are adjacent if one can be transformed to the other by moving a single
token along an edge of $G$ to an adjacent vertex.  In this way  $G^{(k)}$ is interpreted as the space of all such configurations.   See~\cite{Fabila2012} for a related construction in which no vertex can hold more than one token. 

The reduced power $G^{(k)}$ may also be interpreted the {\em reachability graph} for a fundamental class of stochastic Petri nets 
with $k$ tokens, $v=|V(G)|$ places, and $2|E(G)|$ flow relations (directed arcs) between places \cite{BuchholzKemper02,Reisig13}.   The arc from place $a_i$ (origin) to place $a_j$ (destination) has firing rate $n_i q_{ij}$ given by the product of transition rate $q_{ij}$ and the number  $n_i$ of tokens  in the origin place.  That is, the $a_i \rightarrow a_j$ firing time is the minimum of $n_i$ exponentially distributed random variables with expectation $1/q_{ij}$.  The $a_i \rightarrow a_j$ firing rate per token will be denoted $q_{ij}[a_1^{n_1}a_2^{n_2}\cdots a_v^{n_v}] $ when it is a function of the global state (token configuration) of the stochastic Petri net.

The token interpretation can be helpful in deducing properties of reduced powers, such as
the following. 
\begin{proposition}
\label{PROP:VERTEXDEGREE}
\label{EQN:DEGREE}
The vertex $a_1^{n_1}a_2^{n_2}\cdots a_v^{n_v}$ of $G^{(k)}$ has degree
\begin{equation*}
\deg\big(a_1^{n_1}a_2^{n_2}\cdots a_v^{n_v}\big)=\sum_{n_i\geq 1}\deg_G(a_i).
\end{equation*}
\end{proposition}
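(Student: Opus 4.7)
The plan is to exploit the token/configuration interpretation from the preceding section and count neighbors directly. Write $w = a_1^{n_1}a_2^{n_2}\cdots a_v^{n_v}$, and think of this as a configuration of tokens with $n_i$ tokens sitting at vertex $a_i$. By Definition~\ref{DEF:REDUCED}, the neighbors of $w$ in $G^{(k)}$ are exactly the configurations obtained by selecting an index $i$ with $n_i\geq 1$, selecting an edge $a_ia_j\in E(G)$, and moving a single token from $a_i$ to $a_j$; the resulting neighbor is $a_j\cdot f$, where $f=a_1^{n_1}\cdots a_i^{n_i-1}\cdots a_v^{n_v}\in M_{k-1}(G)$.

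First I would enumerate the set of admissible ``moves'' $(i,j)$: these are ordered pairs with $n_i\geq 1$ and $a_ia_j\in E(G)$. For each fixed $i$ with $n_i\geq 1$, the number of such $j$'s equals $\deg_G(a_i)$, so the number of moves is exactly $\sum_{n_i\geq 1}\deg_G(a_i)$. The proposition then reduces to showing that the map from moves to neighbors is a bijection.

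The key step --- and the one that requires care --- is injectivity: distinct moves must produce distinct neighbors. Suppose the moves $(i,j)$ and $(i',j')$ both yield the same neighbor $w'$. Record each configuration as its exponent vector in $\mathbb{Z}^v$. Moving a token from $a_i$ to $a_j$ changes the exponent vector of $w$ by $e_j-e_i$, and similarly for $(i',j')$ by $e_{j'}-e_{i'}$. Equality of the resulting configurations forces $e_j-e_i=e_{j'}-e_{i'}$; since $i\ne j$ and $i'\ne j'$, comparing positive and negative coordinates gives $i=i'$ and $j=j'$. Surjectivity onto the neighbor set is immediate from Definition~\ref{DEF:REDUCED}, since every edge incident to $w$ arises from some factorization $w=a_i f$ with $a_ia_j\in E(G)$.

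I do not expect a serious obstacle here; the only subtlety is guarding against the possibility that two distinct moves could collapse to the same neighbor in $G^{(k)}$ (which is a quotient, so such collapses are the thing to worry about). That collapse is ruled out by the short linear-algebra argument above on exponent vectors, after which summing $\deg_G(a_i)$ over indices $i$ with $n_i\geq 1$ finishes the proof.
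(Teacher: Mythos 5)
Your proof is correct and follows the same route as the paper: interpret the vertex as a token configuration and observe that neighbors correspond exactly to moves of a single token from some occupied $a_i$ along an edge of $G$. The paper's proof is a one-liner that leaves the injectivity of the move-to-neighbor correspondence implicit; your exponent-vector argument makes that step explicit but adds nothing essentially different.
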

\begin{proof}
The configuration $a_1^{n_1}a_2^{n_2}\cdots a_v^{n_v}$ can be transformed to an adjacent
configuration only by moving a token on some vertex $a_i$ (with $n_i\geq 1$) to an adjacent vertex.
\end{proof}
\begin{figure}[h!]
\centering
\includegraphics{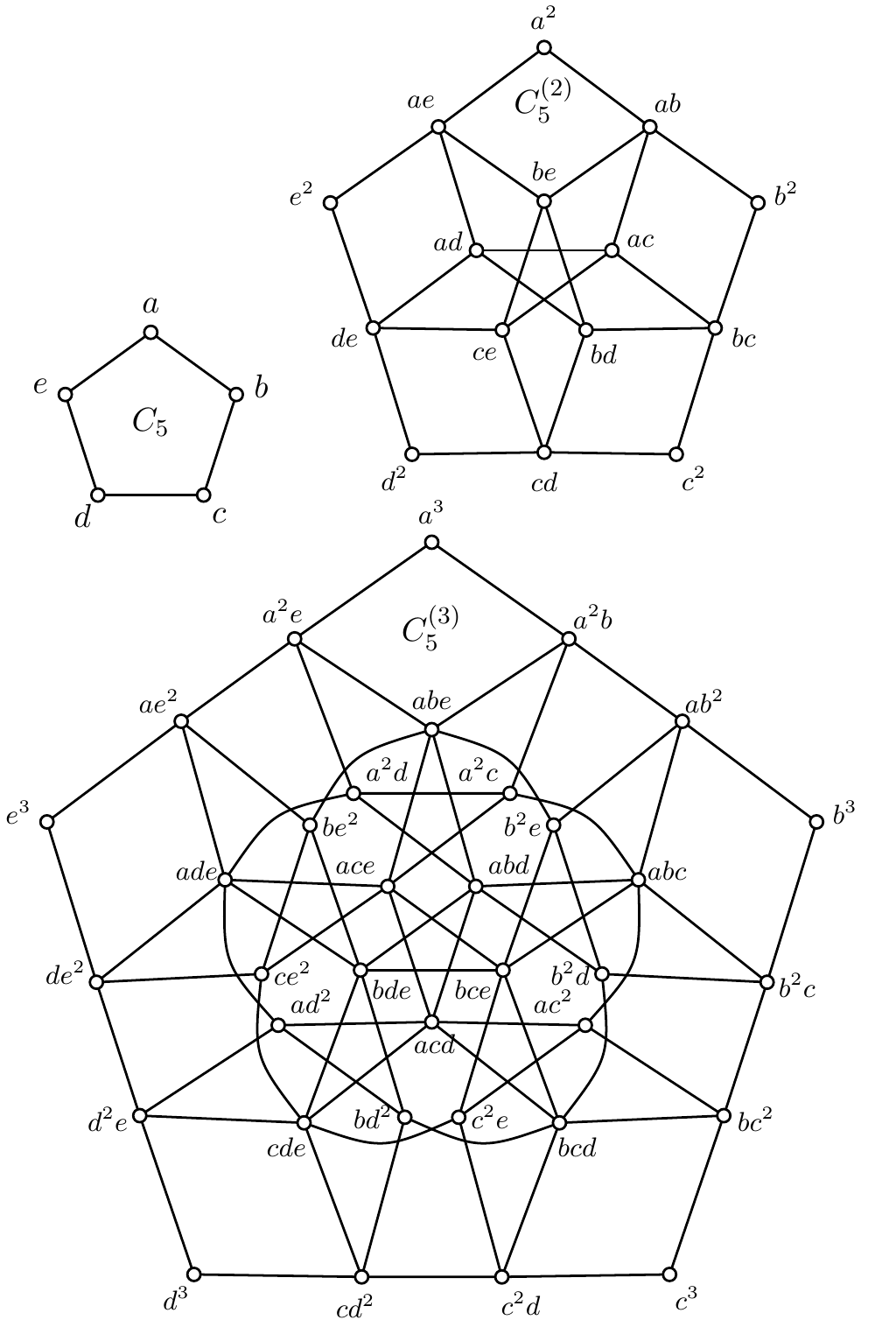}
\caption{The five-cycle $C_5$ and its second and third reduced powers
$C_5^{(2)}$ and $C_5^{(3)}$.}
\label{Fig:FiveCycle}
\end{figure}

\section{Cycle bases and minimum cycle bases}
\label{SECTION:MCB}

\begin{figure}[b!]
\centering
\includegraphics{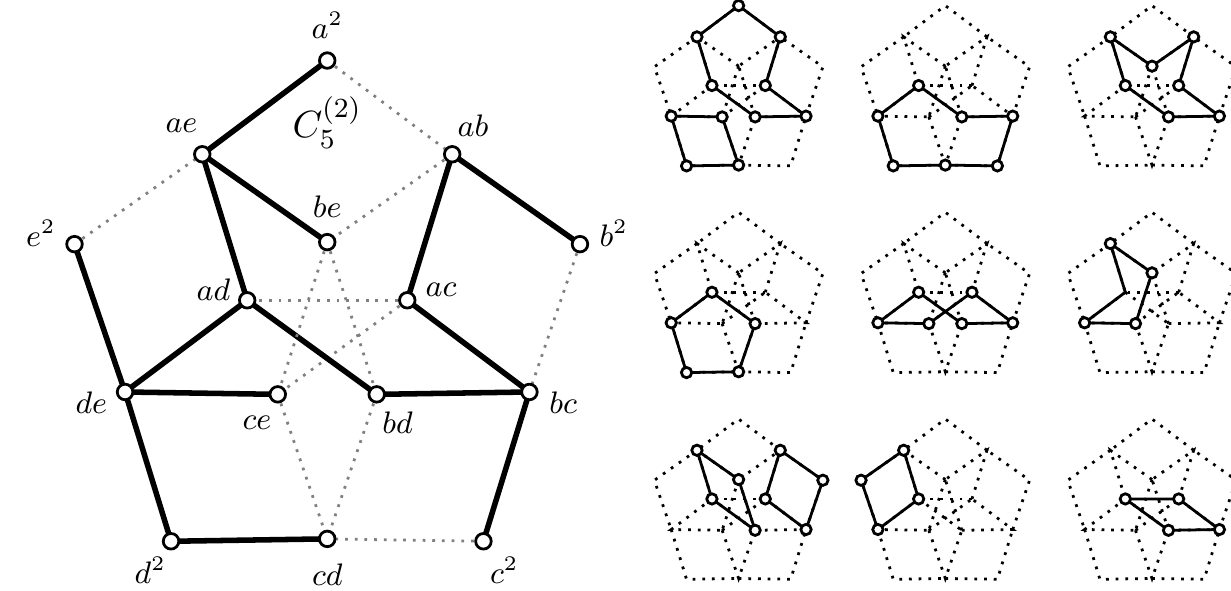}
\caption{A spanning tree $T$ of $G=C_5^{(2)}$.  The set $S=E(G)-E(T)$ has $\beta(G)= 25-15+1=11$ edges. 
For each $e\in S$, let $C_e$ be
the unique cycle in $T+e$. The set $\{C_e\ | \ e\in S\}$  is a cycle basis for $G$, but not a minimum cycle basis (see~Figure~\ref{Fig:FiveCycleMCB}).}
\label{Fig:FiveCycleTree}
\end{figure}

Here we quickly review the fundamentals of cycle spaces and bases. The following is
condensed from Chapter 29 of~\cite{HammackEtal2011}.

For a graph $G$, its {\it edge space} $\mathscr{E}(G)$ is the power set of $E(G)$ viewed
as a vector space over the two-element field $\mathbb{F}_2=\{0,1\}$, where the zero vector is 
$0=\emptyset$ and addition is symmetric difference.
Any vector $X\in\mathscr{E}(G)$ is viewed as the subgraph of $G$ induced on $X$, so
$\mathscr{E}(G)$ is the set of all subgraphs of $G$ without isolated vertices.
Thus $E(G)$ is a basis for $\mathscr{E}(G)$, and $\dim(\mathscr{E}(G))=|E(G)|$.
The {\it vertex space} $\mathscr{V}(G)$ of $G$ is the power set of $V(G)$ as
a vector space over $\mathbb{F}_2$. It is the set of all edgeless subgraphs
of $G$ and its dimension is $|V(G)|$.

We define a linear {\it boundary map} $\delta_G:\mathscr{E}(G)\to \mathscr{V}(G)$ by declaring
that $\delta_G(xy)=x+y$ on the basis $E(G)$.
The subspace $\mathscr{C}(G)=\ker(\delta_G)$ is called the
{\it cycle space} of $G$. It contains precisely the subgraphs
in $\mathscr{E}(G)$ whose vertices all have even degree (that is, the Eulerian
subgraphs).  Because every such subgraph
can be decomposed into edge-disjoint cycles, each in $\mathscr{C}(G)$,
we see that  $\mathscr{C}(G)\subseteq$
$\mathscr{E}(G)$ is spanned by the cycles in $G$. 

The dimension of $\mathscr{C}(G)$, denoted $\beta(G)$, is called the (first)
{\it Betti number} of $G$.
If $G$ is connected, the rank theorem applied to $\delta_G$ yields
\begin{equation}
\label{Eqn:Cycles:Beta}
\beta(G)=|E(G)|-|V(G)|+1.
\end{equation}

A basis for the cycle space is called a {\it cycle basis}.
To make a cycle basis of a connected graph $G$, take a spanning tree
$T$, so the set $S=E(G)-E(T)$ has $|E(G)|-|V(G)|+1=\beta(G)$ edges. 
For each $e\in S$, let $C_e$ be
the unique cycle in $T+e$. Then the set $\mathscr{B}=$ $\{C_e\ | \ e\in S\}$ is linearly
independent.
As $\mathscr{B}$ has cardinality $\beta(G)$, it is a basis (see Figure~\ref{Fig:FiveCycleTree}).

The elements of a cycle basis are naturally weighted by their number of edges.
The {\it total length} of a cycle basis $\mathscr{B}$ is the number
$\ell(\mathscr{B})=\sum_{C\in\mathscr{B}}|C|$. A cycle basis with
the smallest possible total length is called a {\it minimum cycle basis}, or {\it MCB}.

The cycle space is a weighted matroid where each element $C$ has weight $|C|$.  Hence the
Greedy Algorithm \cite{Oxley1992} always terminates with an MCB: Begin with 
$\mathscr{M}=\emptyset$; then append shortest cycles to it, maintaining independence of
$\mathscr{M}$, until no further shortest  cycles can be appended; then append next-shortest
cycles, maintaining independence, until no further such cycles can be appended; and so on, until
$\mathscr{M}$ is a maximal independent set. Then $\mathscr{M}$ is an MCB.

Here is our primary criterion for determining if a cycle basis is an MCB. (See Exercise 29.4
of~\cite{HammackEtal2011}.)

\begin{proposition}
\label{PROP:TEST}
A cycle basis $\mathscr{B}$ $=$ $\{B_1, B_2, \ldots, B_{\beta(G)}\}$ for a graph $G$
is an MCB if and only if every $C$  $\in$ $\mathscr{C}(G)$ is a sum of basis elements whose
lengths do not exceed $|C|$. 
\end{proposition}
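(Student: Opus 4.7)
The plan is to prove the two directions of the biconditional separately, in both cases exploiting the matroid structure of $\mathscr{C}(G)$ with weight $|C|$ already invoked by the paper.

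For the forward direction ($\Rightarrow$), I would argue by a single exchange step. Given an MCB $\mathscr{B}$ and any $C\in\mathscr{C}(G)$, write the unique expansion $C=\sum_{i\in I}B_i$ with $B_i\in\mathscr{B}$, and suppose for contradiction that some $B_j$ in this sum has $|B_j|>|C|$. Working in $\mathbb{F}_2$ and solving for $B_j$ yields $B_j=C+\sum_{i\in I,\,i\neq j}B_i$, so $B_j\in\operatorname{span}\bigl((\mathscr{B}\setminus\{B_j\})\cup\{C\}\bigr)$, and hence $\mathscr{B}':=(\mathscr{B}\setminus\{B_j\})\cup\{C\}$ also spans $\mathscr{C}(G)$. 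Since $|\mathscr{B}'|=\beta(G)$, it is a basis, and its total length $\ell(\mathscr{B}')=\ell(\mathscr{B})-|B_j|+|C|<\ell(\mathscr{B})$ contradicts the minimality of $\mathscr{B}$.

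For the reverse direction ($\Leftarrow$), the plan is to show that any basis satisfying the stated condition could have been produced by the Greedy Algorithm, and hence is an MCB. Order $\mathscr{B}=\{B_1,\ldots,B_m\}$ so that $|B_1|\leq|B_2|\leq\cdots\leq|B_m|$, where $m=\beta(G)$. I would verify inductively that each $B_i$ is a shortest cycle in $\mathscr{C}(G)$ that is linearly independent of $\{B_1,\ldots,B_{i-1}\}$. Indeed, if some cycle $C$ with $|C|<|B_i|$ were independent of that set, the hypothesis would yield an expansion $C=\sum_{j\in J}B_j$ with each $|B_j|\leq|C|<|B_i|$; the non-decreasing sort then forces every such $j$ to satisfy $j<i$, so $C\in\operatorname{span}\{B_1,\ldots,B_{i-1}\}$, contradicting its independence from that set.

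The main point to be careful about is the tie-breaking in the sorting of the reverse direction, but the non-decreasing order alone suffices because $j\geq i$ forces $|B_j|\geq|B_i|$. Edge cases (namely $C\in\mathscr{B}$ in the forward direction, and $i=1$ in the reverse direction) reduce to trivialities. Otherwise the argument is routine: one exchange step for the forward direction, and recognition of the greedy structure for the reverse.
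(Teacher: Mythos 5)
Your proof is correct; note that the paper offers no proof of this proposition at all, deferring to Exercise~29.4 of the cited handbook, so there is nothing to compare against except the standard argument---which is exactly what you give. Both halves are sound: in the forward direction the hypothesis is precisely a statement about the \emph{unique} $\mathbb{F}_2$-expansion of $C$ in the basis, so the single exchange step $\mathscr{B}'=(\mathscr{B}\setminus\{B_j\})\cup\{C\}$ yields a spanning set of cardinality $\beta(G)$, hence a basis, of strictly smaller total length; in the reverse direction your sorting argument correctly exhibits $\mathscr{B}$ as a possible run of the matroid Greedy Algorithm, which the paper has already asserted always terminates with an MCB. One small refinement is worth recording: under the paper's definition a ``cycle basis'' is any basis of $\mathscr{C}(G)$, so your exchange is legitimate as written; but under the stricter convention that basis elements must be genuine cycles, the element $C$ (an arbitrary Eulerian subgraph) should first be decomposed into edge-disjoint cycles $C=C_1+\cdots+C_r$ with $\sum_t |C_t|=|C|$, and $B_j$ replaced by some $C_t$ not in the span of $\mathscr{B}\setminus\{B_j\}$ (such a $C_t$ exists, else $B_j$ itself would lie in that span); since $|C_t|\leq |C|<|B_j|$, the contradiction persists.
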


For graphs $G$ and $H$, a {\it weak homomorphism} $\varphi:G\to H$ is
a map $\varphi:V(G)\to V(H)$ having the property that for each $xy$ of $G$, either
$\varphi(x)\varphi(y)$ is an edge of $H$, or $\varphi(x)=\varphi(y)$. Such a map induces a linear
map $\varphi^*:\mathscr{E}(G)\to \mathscr{E}(H)$ defined on the basis $E(G)$ as
$\varphi^*(xy)=\varphi(x)\varphi(y)$ provided $\varphi(x)\ne\varphi(y)$, and 
$\varphi^*(xy)=0$ otherwise. Similarly we define 
$\varphi_V^*:\mathscr{V}(G)\to \mathscr{V}(H)$ as $\varphi_V^*(x)=\varphi(x)$ on the basis $V(G)$.
Thus we have the following commutative diagram. (Check it on the basis $E(G)$.)
\begin{center}
\begin{tikzpicture}[scale=1,style=thick]
\def\len{3}
\def\hgt{1.25}
\draw (0,0) node  (UL) {$\mathscr{E}(G)$};
\draw (\len,0) node (UR) {$\mathscr{E}(H)$};
\draw (0,-\hgt) node  (LL) {$\mathscr{V}(G)$};
\draw (\len,-\hgt) node (LR) {$\mathscr{V}(H)$};
\draw [->] (UL) to node [left] {$\delta_G$} (LL);
\draw [->] (UR) to node [right] {$\delta_H$} (LR);
\draw [->] (UL) to node [above] {$\varphi^*$} (UR);
\draw [->] (LL) to node [above] {$\varphi_V^*$} (LR);
\end{tikzpicture}
\end{center}

From this, $\varphi^*$ restricts to a map $\mathscr{C}(G)\to \mathscr{C}(H)$
on cycle spaces, because if $C\in \mathscr{C}(G)$, then $\delta_G(C)=0$, whence
$\delta_H\varphi^*(C)=\varphi_V^*\delta_G(C)=0$, meaning $\varphi^*(C)\in\ker(\delta_H)=$
$\mathscr{C}(H)$. Certainly if $\varphi$ is a graph isomorphism, then $\varphi^*$ is a
vector space isomorphism. 

Of special interest will be the projections $p_i:G^k\to G$, where
$p_i(x_1,x_2,\ldots,x_k)=x_i$. These are weak homomorphisms and hence induce
linear maps  $p_i^*:\mathscr{C}(G^k)\to \mathscr{C}(G)$.

Another important map is the natural projection $\eta:G^k\to G^{(k)}$ sending each $k$-tuple
$x=(x_1,x_2,\ldots,x_k)$ to the monomial representing the $S_k$-orbit containing $x$. This 
map $\eta^*$ also is a weak homomorphism, inducing a linear map $\eta^*:\mathscr{C}(G^k)\to \mathscr{C}(G^{(k)})$.

\begin{lemma}
If $G$ is connected, the map $\eta^*:\mathscr{C}(G^k)\to \mathscr{C}(G^{(k)})$ is surjective. 
\label{LEMMA:ETA}
\end{lemma}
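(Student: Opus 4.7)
Since cycles span $\mathscr{C}(G^{(k)})$, it suffices to lift each cycle $C\colon v_0\to v_1\to\cdots\to v_m=v_0$ of $G^{(k)}$ to an element of $\mathscr{C}(G^k)$. My plan is to path-lift $C$ to a walk in $G^k$ and then close it using an auxiliary walk whose $\eta^*$-image is zero. Every edge $v_iv_{i+1}$ of $G^{(k)}$ represents moving a token from some $s\in V(G)$ to an adjacent $t$, so given any $\tilde v_i\in\eta^{-1}(v_i)$, changing one $s$-coordinate of $\tilde v_i$ to $t$ produces an adjacent $\tilde v_{i+1}\in\eta^{-1}(v_{i+1})$. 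Iterating yields a walk $\tilde W=\tilde v_0\tilde v_1\cdots\tilde v_m$ in $G^k$ with $\eta^*(\tilde W)=C$, whose endpoint $\tilde v_m$ lies in the fiber $\eta^{-1}(v_0)$, so $\tilde v_m=\pi(\tilde v_0)$ for some $\pi\in S_k$. If I can find a walk $Q$ in $G^k$ from $\pi(\tilde v_0)$ back to $\tilde v_0$ with $\eta^*(Q)=0$, then $\tilde W\cdot Q$ is a closed walk, so the associated edge sum lies in $\mathscr{C}(G^k)$ and maps under $\eta^*$ to $C+0=C$.

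The crux therefore reduces to the following claim: \emph{for every $\sigma\in S_k$ and $\tilde y\in V(G^k)$, there is a walk in $G^k$ from $\tilde y$ to $\sigma(\tilde y)$ with zero $\eta^*$-image.} Such walks concatenate to walks of the same type, and transpositions generate $S_k$, so it suffices to treat a transposition $\tau=(ij)$. When $y_i=y_j$ the claim is trivial; otherwise, using the connectedness of $G$, pick a path $u_0u_1\cdots u_\ell$ in $G$ from $y_i$ to $y_j$, walk coordinate $i$ along this path ($u_0\to u_1\to\cdots\to u_\ell$), then walk coordinate $j$ back along its reverse ($u_\ell\to u_{\ell-1}\to\cdots\to u_0$). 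The endpoint matches $\tau(\tilde y)$, and the multiset trajectory is the palindrome $M_0,M_1,\ldots,M_\ell,M_{\ell-1},\ldots,M_0$. Each edge of $G^{(k)}$ in the projection therefore occurs with the same multiplicity in each of the two phases, hence with even total multiplicity, giving zero $\eta^*$-image.

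The main obstacle is exactly this closing step. A natural but doomed alternative is to close $\tilde W$ by iterating it along the orbit of $\pi$: the resulting closed walk in $G^k$ projects to $\operatorname{ord}(\pi)\cdot C$ in $\mathbb{F}_2$, which vanishes whenever $\operatorname{ord}(\pi)$ is even. The swap construction circumvents this parity obstruction by producing $Q$ directly from a path in $G$, independently of $\tilde W$.
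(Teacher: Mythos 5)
Your proposal is correct and follows essentially the same route as the paper: lift the cycle edge-by-edge to a walk in $G^k$, then close it up inside a fiber of $\eta$ using a zero-image walk built from transpositions, where the two-phase ``move coordinate $i$ along a path, then move coordinate $j$ back along its reverse'' construction and the resulting pairwise cancellation of projected edges is exactly the paper's $Q$/$R$ argument. The only cosmetic difference is that you perform a single fiber correction at the end of the lifted walk, whereas the paper inserts a correction path $P_i$ after each lifted edge.
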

\begin{proof} 
Because any element of $\mathscr{C}(G^{(k)})$ is an edge-disjoint union of cycles,
it suffices to show that any
cycle $C=f_0f_1\cdots f_n f_0\in \mathscr{C}(G^{(k)})$ equals $\eta^*(C')$ for some
$C'\in \mathscr{C}(G^{k})$. For each index $i$, let $x_iy_{i+1}\in E(G^k)$ be an edge for which
$\eta^*(x_iy_{i+1})=\eta(x_i)\eta(y_{i+1})=f_if_{i+1}$. (Each $x_i$, $y_i$ is a $k$-tuple, and
index arithmetic is modulo $n$.)
Note that $\eta(x_i)=\eta(y_i)$, meaning $x_i$ and $y_i$ are in the same $S_k$-orbit, that is,
$y_i$ equals $x_i$ with its coordinates permuted. 

We will argue that each pair $y_i$, $x_i$ can be joined by a path
$P_i$ in $G^k$, with $\eta^*(P_i)=0$. This will prove the lemma because then 
$$C'=P_0+x_0y_1+P_1+x_1y_2+P_2+\ldots +P_n+x_ny_0\;\in \;\mathscr{C}(G^k)$$
satisfies $\eta^*(C')=C$.

Consider two vertices $(\ldots a\ldots b\ldots)$ and $(\ldots b\ldots a\ldots)$ of $G^k$
that are identical except for the transposition of two coordinates $a$ and $b$. Take a path
$a=v_0v_1\cdots v_q=b$ from $a$ to $b$ in $G$. Now form the following two paths in $G^k$
$$\begin{array}{ccl}
Q &=& (\ldots a\ldots b\ldots)(\ldots v_1\ldots b\ldots)(\ldots v_2\ldots b\ldots)\ldots (\ldots b\ldots b\ldots)\\
R &=& (\ldots b\ldots a\ldots)(\ldots b\ldots v_1\ldots)(\ldots b\ldots v_2\ldots)\ldots (\ldots b\ldots b\ldots).
\end{array}$$
Concatenation of $Q$ with the reverse of $R$ is a path from 
$(\ldots a\ldots b\ldots)$ to $(\ldots b\ldots a\ldots)$. Moreover $\eta^*(Q+R)=0$ because
the images of the $j$th edges of $Q$ and $R$ are always equal; hence the edges cancel in
pairs. As $y_i$ and $x_i$ differ only by a sequence of transpositions of their coordinates,
the above construction can be used to build up a path $P_i$ from $y_i$ to $x_i$ with
$\eta(P_i)=0$.
\end{proof}

\begin{figure}[b!]
\centering
\includegraphics[width=0.98\textwidth]{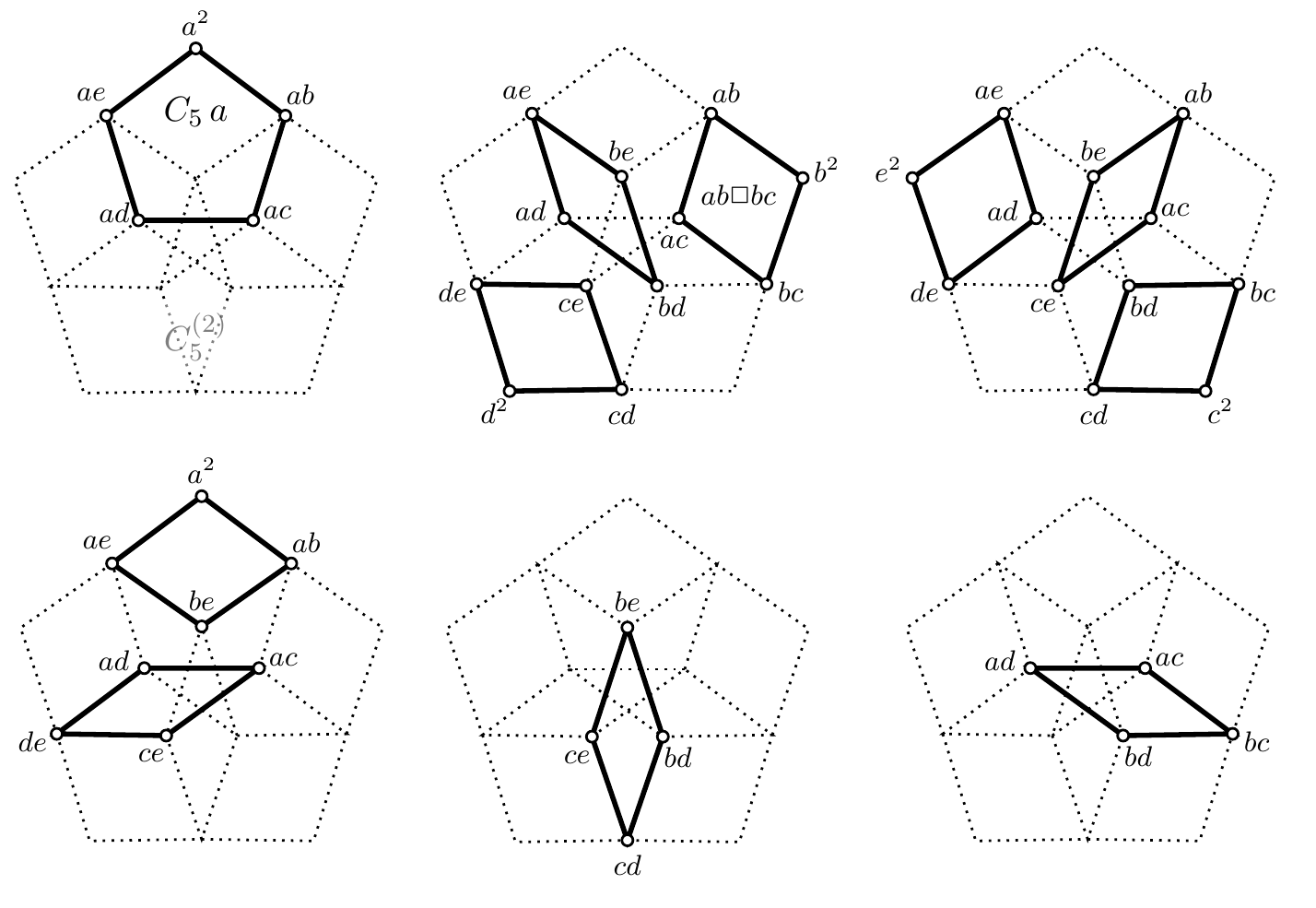}
\caption{The union $\{C_5a\}\cup\mathscr{B}$ is an MCB for
$\mathscr{C}(C_5^{(2)})=\mathscr{C}(C_5\,a)\,\bigoplus\, \mathscr{S}(C_5^{(2)})$.}
\label{Fig:FiveCycleMCB}
\end{figure}

We have seen that  the projections $p_i:G^k\to G$ induce linear maps 
$\mathscr{C}(G^{k})\to \mathscr{C}(G)$.
But there seems to be no obvious way of defining a projection $G^{(k)}\to G$.
 Still,  it is possible to construct a natural linear
map $p^*:\mathscr{C}(G^{(k)})\to \mathscr{C}(G)$. To do this, recall that any edge of
$G^{(k)}$ has form $af\;bf$ where $ab\in E(G)$ and $f\in M_{k-1}(G)$.
We begin by defining $p^*$ on the edge space.
Put $p^*(af\;bf)=ab$ for each edge $af\;bf$ in the basis $E(G^{(k)})$ and extend linearly
to a map $p^*:\mathscr{E}(G^{(k)})\to \mathscr{E}(G)$. Note that
$\sum_{i=1}^{k} p_i^*=p^*\circ \eta^*$. (Confirm it by checking it on
the basis $E(G^k)$ of $\mathscr{E}(G^k)$.) Now, if $X\in \mathscr{C}(G^{(k)})$, then
Lemma~\ref{LEMMA:ETA} guarantees $X=\eta^*(Y)$ for some $Y$ in the cycle space of $G^k$.
Then $p^*(X)=p^*(\eta^*(Y))=\sum_{i=1}^{k} p_i^*(Y)\in \mathscr{C}(G)$.

We now have a linear map $p^*:\mathscr{C}(G^{(k)})\to \mathscr{C}(G)$
for which $p^*(af\;bf)=ab$.

\section{Decomposing the cycle space of a reduced power}
\label{SECTION:DECOMPOSING}

This section explains how to decompose the cycle space of a reduced power into the direct
sum of particularly simple subspaces.

To begin, notice that if $f$ is a fixed monomial in $M_{k-1}(G)$, then
there is an embedding $G\to G^{(k)}$ defined as $x\mapsto xf$. Let us call the image
of this map $Gf$. Notice that $Gf$ is an induced subgraph of $ G^{(k)}$ and is isomorphic
to $G$. 

\begin{proposition}
For any fixed $f\in M_{k-1}(G)$, we have
$\mathscr{C}(G^{(k)})=\mathscr{C}(Gf)\bigoplus \ker(p^*)$.
\label{PROP:SUM}
\end{proposition}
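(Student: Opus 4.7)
The plan is to leverage the fact that the restriction $p^*|_{\mathscr{C}(Gf)}:\mathscr{C}(Gf)\to \mathscr{C}(G)$ is an isomorphism; once this is in hand, the direct sum decomposition falls out as a one-line linear algebra consequence.

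First I would observe that the map $\iota:G\to G^{(k)}$ sending $x\mapsto xf$ is a graph isomorphism onto the induced subgraph $Gf$, and it therefore induces a linear isomorphism $\iota^*:\mathscr{E}(G)\to \mathscr{E}(Gf)$ that restricts to an isomorphism $\mathscr{C}(G)\to \mathscr{C}(Gf)$.

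Next I would verify that the composition $p^*\circ\iota^*$ is the identity on $\mathscr{E}(G)$. It suffices to check on the basis $E(G)$: for each $ab\in E(G)$ we have $\iota^*(ab)=af\cdot bf$, and by definition $p^*(af\cdot bf)=ab$. Hence $p^*$ restricted to $\mathscr{C}(Gf)$ is a two-sided inverse of $\iota^*|_{\mathscr{C}(G)}$, and is itself an isomorphism $\mathscr{C}(Gf)\to \mathscr{C}(G)$.

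From here the decomposition is immediate. Injectivity of $p^*|_{\mathscr{C}(Gf)}$ forces $\mathscr{C}(Gf)\cap \ker(p^*)=\{0\}$. For the sum, take any $C\in \mathscr{C}(G^{(k)})$; the discussion preceding the proposition (using Lemma~\ref{LEMMA:ETA}) tells us $p^*(C)\in \mathscr{C}(G)$, so there is a unique $C_1\in \mathscr{C}(Gf)$ with $p^*(C_1)=p^*(C)$. Then $C+C_1\in \ker(p^*)$, and $C=C_1+(C+C_1)$ is the desired expression.

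I do not anticipate any serious obstacle: the entire argument reduces to the purely formal observation that $p^*$ and $\iota^*$ are mutually inverse on the copy $Gf\cong G$ sitting inside $G^{(k)}$. The only non-trivial input from the paper is that $p^*$ really lands in $\mathscr{C}(G)$, and this has already been secured via the surjectivity of $\eta^*$.
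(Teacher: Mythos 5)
Your proof is correct and follows the same route as the paper: both arguments hinge on the observation that $p^*$ restricted to $\mathscr{C}(Gf)$ is an isomorphism onto $\mathscr{C}(G)$, after which the decomposition is elementary linear algebra. You have simply written out the details (the inverse $\iota^*$ and the explicit splitting $C=C_1+(C+C_1)$) that the paper leaves implicit.
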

\begin{proof}
Consider the map $p^*:\mathscr{C}(G^{(k)})\to \mathscr{C}(G)$.
Its restriction $\mathscr{C}(Gf)\to \mathscr{C}(G)$ is a vector space isomorphism.
The proof now follows from elementary linear algebra.
\end{proof}

Next we define a special type of cycle in a reduced power.
Given distinct edges $ab$ and $cd$ of $G$ and any $f\in M_{k-2}(G)$,
we have a square  in $G^{(k)}$ with vertices $acf, bcf, bdf, adf$. Let us call such a square
a {\it Cartesian square}, and denote it as $(ab\Box cd)f$. See Figure~\ref{FIG:SQUARE}.
\begin{figure}[h!]
\centering
\includegraphics{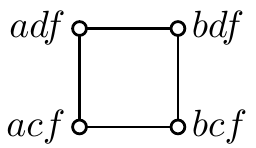}
\caption{A Cartesian square $(ab\Box cd)f$ in $G^{(k)}$ with $k \geq 2$.}
\label{FIG:SQUARE}
\end{figure}

We regard this as
a cycle in the cycle space; it is the subgraph of $G^{(k)}$ that is precisely the sum of edges
$acf\,bcf + bcf\,bdf + bdf\,adf + adf\,acf.$
(Observe that this sum is zero if and only if $ab=cd$.) 
We remark that although a subgraph $Gf$ may have squares, they are not {\it Cartesian squares}
because they do not have the form specified above.
Define the {\it square space} $\mathscr{S}(G^{(k)})$ to be the subspace of $\mathscr{C}(G^{(k)})$
that is spanned by the Cartesian squares. 

If $S$ is a Cartesian square, then $p^*(S)=0$, so $\mathscr{S}(G^{(k)})\subseteq \ker(p^*)$.
In the remainder of the paper we will show that in fact $\mathscr{S}(G^{(k)})= \ker(p^*)$,
so that Proposition~\ref{PROP:SUM} gives
$\mathscr{C}(G^{(k)})=\mathscr{C}(Gf)\bigoplus \mathscr{S}(G^{(k)})$.
Simultaneously we will craft a simple MCB for $G^{(k)}$ by concatenating
MCBs of  $\mathscr{C}(Gf)$ and $\mathscr{S}(G^{(k)})$.
See~Figure~\ref{Fig:FiveCycleMCB} for an example.

\section{Cycle bases for reduced powers}
\label{SECTION:HIGHER}
This section describes a simple cycle basis for the reduced $k$th power of a graph $G$.
If $G$ has no triangles, this cycle basis will be an MCB. (We do not consider MCBs 
in the cases that $G$ has triangles because the applications we have in mind do not involve
such situations. 
Constructing MCBs when $G$ has triangles would be an interesting research problem.)

Let $G$ be a connected graph with $v$ vertices and $e$ edges. Recall that by
Equations~(\ref{EQN:VERTICES}) and~(\ref{EQN:EDGES}), the graph $G^{(k)}$ has
$\binom{k+v-1}{k}$ vertices, identified with the monomials $M_k(G)$, and $e\binom{k+v-2}{k-1}$ edges.
Thus any cycle basis has dimension
\begin{equation}
\beta(G^{(k)})=e\binom{k+v-2}{k-1}-\binom{k+v-1}{k}+1.
\label{EQN:DIMENSION_Gk}
\end{equation}

We first examine the square space. Any pair of distinct edges $ab$ and $cd$ of $G$ corresponds
to a Cartesian square $(ab\Box cd)f$, where $f\in M_{k-2}(G)$, so there are
$\binom{e}{2}\binom{k+v-3}{k-2}$ such squares.
But this set of squares may not be independent. Our first task will be to construct a linearly
independent set of Cartesian squares.

To begin, put $V(G)=\{a_1,a_2,\ldots, a_v\}$.
Let $T$ be a rooted spanning tree of $G$ with root $a_1$, and arrange the indexing
so its order respects a breadth-first traversal of $T$, that is,
for each $i$ the vertex $a_i$ is not closer to the root than any $a_j$ for which
$j<i$ (see Figure~\ref{FIG:TREE}).

\begin{figure}[h]
\centering
\includegraphics{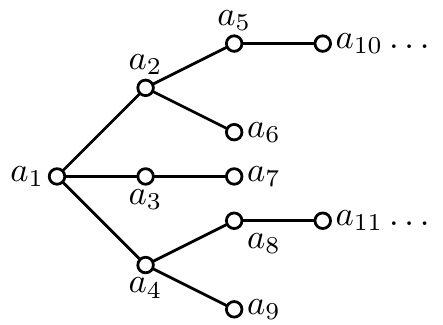}
\caption{A rooted spanning tree $T$ of $G$ with $V(G)=\{a_1,a_2,\ldots, a_v\}$, root $a_1$, and  indexing that 
respects a breadth-first traversal of $T$.}
\label{FIG:TREE}
\end{figure}

With this labeling, any edge of $T$ is uniquely determined by its endpoint $a_j$ that
is furthest from the root. For each $2\leq i\leq v$, let $e_j$ be the edge of $T$ 
that has endpoints $a_i$ and $a_j$, with $a_j$ further from the root than $a_i$.
Let $M_{k-2}(a_1,a_2,\ldots a_j)$ denote the monic monomials of degree $k-2$
in indeterminates $a_1,a_2,\ldots,a_j$, with $1\leq j\leq v$. Define the following sets of
Cartesian squares in  $G^{(k)}$.
\begin{eqnarray*}
 \Upsilon &=& \left\{ (e_i\Box e_j)f \mid 2\leq i<j\leq v, f\in M_{k-2}(a_1,a_2,\ldots, a_j)\right\}, \\
\Omega &=& \left\{ (a_\ell a_m\Box e_j)f  \mid a_\ell a_m\in E(G)\!-\!E(T), 2\leq j\leq v,  f\in M_{k-2}(a_1,a_2,\ldots, a_j)\right\}.
\end{eqnarray*}

Shortly we will show that $\Upsilon\cup\Omega$ is linearly independent. But first a few
quick informal words about why we would expect this to be the case.
Suppose $k\geq 3$ and take three distinct edges
$a_ia_j$, $a_\ell a_m$ and $a_pa_q$ in $G$, and let $f\in M_{k-3}(G)$.
Figure~\ref{FIG:CUBE} indicates that these edges result in a cube in the $k$th
reduced power.
Each of the six square faces of this cube is in the square space.
But the faces are dependent because any one of them is a sum of the others.
Call a square face such as $(a_ia_j\Box a_\ell a_m)a_qf\,$ a  ``top square''
of a cube if the monomial $a_qf$ involves an indeterminate $a_t$ with $t>\max\{i,j,\ell,m\}$. 
Sets $\Upsilon$ and $\Omega$ are constructed so as to contain no top squares.

\begin{figure}[h]
\centering
\includegraphics{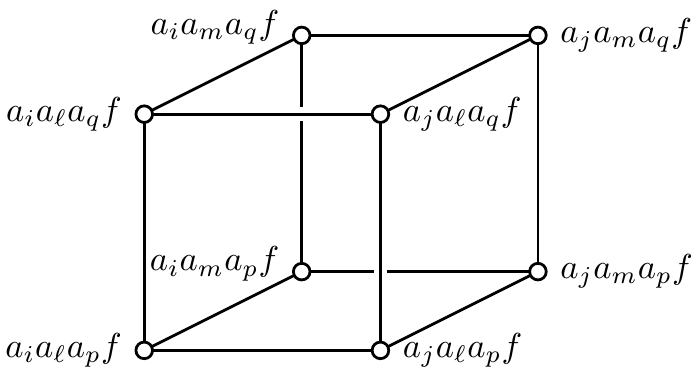}
\caption{A Cartesian cube  $(a_ia_j\Box a_\ell a_m\Box a_pa_q)f$ in the reduced power $G^{(k)}$.
}
\label{FIG:CUBE}
\end{figure}

(A configuration of the type illustrated in Figure~\ref{FIG:CUBE} may not always
be a cube in the combinatorial sense.
The reader is cautioned that if $a_ia_j$, $a_\ell a_m$ and $a_pa_q$ are the edges
of a triangle in $G$, then two of the diagonally opposite vertices of the ``cube'' are the same, as in
$K_3^{(3)}$, shown in Figure~\ref{Fig:Three}. Here there is only one cube, which takes the form
of a central vertex connected to the six vertices of a hexagon.
 This will cause no difficulties in what follows, even
if we entertain the possibility that $G$ does indeed have triangles.)

There is another kind of dependency that is ruled out in the definition of 
$\Upsilon$ and $\Omega$, and we now sketch it. First, imagine $G^2$.
Consider two cycles $A$ and $B$ in $G$ each having exactly one edge not in $T$,
say $a_ia_j$ and $a_\ell a_m$, respectively. Envision
$A\Box B$ is as a torus in $G^2$ with square faces, each edge
shared by two faces. In adding up all the faces, the edges cancel in pairs, giving $0$,
so the squares are dependent. Removing the face $a_ia_j\Box a_\ell a_m$ removes
the dependency. Such squares $a_ia_j\Box a_\ell a_m$ show up in  $G^{(2)}f\subseteq G^{(k)}$ as squares $(a_ia_j\Box a_\ell a_m)f$ with $a_ia_j, a_pa_q\in E(G)-E(T)$.
Sets $\Upsilon$ and $\Omega$ contain no such squares.

\begin{proposition}
The set $\mathscr{B}=\Upsilon \cup \Omega$ is linearly independent.
\label{PROP:IND}
\end{proposition}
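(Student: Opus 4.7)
The plan is to prove linear independence by a graded leading-term argument using the reverse-lexicographic order on $V(G^{(k)}) = M_k(G)$ induced by $a_1 < a_2 < \cdots < a_v$ (monomials compared by their exponent sequences $(n_v, n_{v-1}, \ldots, n_1)$ in lex order). For each $S = (pq \Box e_j)f \in \mathscr{B}$, let $p$ denote the higher-indexed endpoint of the first factor; comparing exponent sequences shows that $m(S) := p\,a_j f$ is the reverse-lex largest of the four vertices of $S$, and that the two edges of $S$ incident to $m(S)$—one with graph edge $pq$ and multiplier $a_j f$, the other with graph edge $e_j$ and multiplier $pf$—together form what I call the leading term $LT(S) \in \mathscr{E}(G^{(k)})$. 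The remaining two edges of $S$ have their larger endpoint strictly below $m(S)$.

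Each edge of $G^{(k)}$ has a unique reverse-lex larger endpoint, so $\mathscr{E}(G^{(k)}) = \bigoplus_m L_m$, where $L_m$ is the span of edges whose larger endpoint is $m$. Then $LT(S) \in L_{m(S)}$ while the other edges of $S$ lie in $L_{m'}$ for certain $m' \prec m(S)$. From a hypothetical dependency $\sum_S c_S S = 0$, let $M$ be maximal among $\{m(S) : c_S \neq 0\}$. Projecting the relation onto $L_M$ kills every summand with $m(S) \prec M$ and leaves $\sum_{m(S) = M} c_S\, LT(S) = 0$ in $L_M$. The proof thus reduces to showing that $\{LT(S) : S \in \mathscr{B},\, m(S) = M\}$ is linearly independent in $L_M$ for each $M$.

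For this local independence, I would cascade through the coefficient equations in $L_M$ by edge type. The constraint $f \in M_{k-2}(a_1, \ldots, a_j)$ forces the factorization $a_j f = M/p$ to be unique—$a_j$ must be the maximum variable appearing in $M/p$—so for each non-tree edge $xy \in E(G)$ with maximum endpoint $x$, there is at most one square $S \in \Omega$ with first factor $xy$ and $m(S) = M$. Because every second-factor edge of $G^{(k)}$ carries a tree graph edge, the coefficient equation of the $L_M$-edge with non-tree graph edge $xy$ involves only the first-factor contribution of that single $\Omega$-square, forcing its coefficient to $0$. Having zeroed the $\Omega$-coefficients at $M$, I would then examine each $\Upsilon$-square $(e_i \Box e_j)f$ via its first-factor edge $(e_i, a_j f)$: max-variable considerations preclude contributions from other $\Upsilon$-squares (any alternative parametrization would require an $f'$ whose variables exceed its own second-factor index), and second-factor contributions could come only from $\Omega$-squares, which have already vanished; this leaves $c_{(e_i \Box e_j)f} = 0$.

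The main obstacle is verifying the local independence rigorously—tracking exactly which $\Omega$- and $\Upsilon$-contributions each $L_M$-edge receives, and confirming that the cascade closes. This is precisely where the careful design of $\Upsilon$ and $\Omega$—engineered (as the authors indicate in the remarks immediately preceding the statement) to avoid the ``top face'' of every Cartesian cube and the toroidal relations of every $G^{(2)} f$—is essential to keep the triangular cascade clean.
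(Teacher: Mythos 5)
Your proof is correct, but it takes a genuinely different route from the paper's. The paper handles $\Upsilon$ and $\Omega$ separately by a ``private edge'' argument: given a nonzero combination, it forms the forest $F\subseteq T$ of tree edges occurring as factors, takes an edge $ab$ of $F$ with $b$ a leaf of $F$, and notes that the edge $a_\ell bf_n\;a_mbf_n$ of a term $(a_\ell a_m\Box ab)f_n$ occurs in no other term, so it cannot cancel; it then separates the two spans by observing that a nonzero combination of $\Omega$-squares retains an edge whose $p^*$-image lies in $E(G)-E(T)$, which no combination of $\Upsilon$-squares possesses. You instead filter the edge space by the reverse-lex larger endpoint and run a triangular elimination on leading terms at each level $M$; I checked that your cascade does close. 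The unique $L_M$-edge carrying a given non-tree graph edge is met by at most one leading term (because $a_j$ is forced to be the top variable of $M/p$), so the $\Omega$-coefficients vanish first; then the only foreign leading terms that can reach the $e_i$-edge of a $\Upsilon$-square $(e_i\Box e_j)f$ at level $M=a_ia_jf$ are second-factor edges of already-eliminated $\Omega$-squares $(a_ja_m\Box e_i)f$ --- another $\Upsilon$-square $(e_{i_0}\Box e_i)f''$ cannot contribute, since that would force $a_j$ to divide $a_{i_0}f''$ while $f''$ has variables of index at most $i<j$. So the order of elimination ($\Omega$ before $\Upsilon$) is essential, and your argument respects it. Both proofs ultimately rest on the same two design features of $\mathscr{B}$ --- every second factor is a tree edge, and the variables of $f$ are bounded by the index of the second factor --- but yours produces a triangular (hence algorithmic) presentation of the basis that makes the exclusion of ``top squares'' transparent, at the cost of more bookkeeping; the paper's is shorter but leans on uniqueness claims that your filtration makes explicit. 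Two small details you should still write out: the verification that $m(S)=p\,a_jf$ is the strict maximum also in the degenerate cases where the two factors share a vertex (e.g.\ $p=a_j$, giving $m(S)=a_j^2f$), and the observation that an edge of $G^{(k)}$ determines its graph edge $ab$ and multiplier $f$ uniquely, so that ``the $L_M$-edge with graph edge $xy$'' is well defined.
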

\begin{proof}
We first show that $\Upsilon$ is linearly independent. Let $X=\sum (e_i\Box e_j)f_n$ be a sum of elements
of $\Upsilon$. Form the forest $F\subseteq T$ consisting of all edges $e_i$ and $e_j$
that appear as  edges of a squares in this sum, and let $ab$ be an edge of $F$ for which $b$ is a leaf.
Then any term $(a_\ell a_m\Box ab)f_n$ of the sum is the unique square in the sum containing the edge
$a_\ell bf_n\;a_mbf_n$. Because no term can cancel this edge, we get $X\ne 0$, so $\Upsilon$ is linearly
independent.

To see that $\Omega$ is linearly independent, consider a sum
$X=\sum (a_\ell a_m\Box e_j)f_n$ of squares in $\Omega$. Again form a forest 
$F\subseteq T$ of the edges $e_j$ and let $ab$ be as before.
Then any term $(a_\ell a_m \Box ab)f_n$ is the unique square in the sum containing the edge
$a_\ell bf_n\;a_mbf_n$. Then $X\ne 0$ because no other term in the sum can cancel this edge;
hence $\Omega$ is linearly independent.

Now we argue that the spans of $\Upsilon$ and $\Omega$ have
zero intersection. By the previous paragraph, any nonzero linear combination of squares in
$\Omega$ has edges of form  $(a_\ell a_m \Box ab)f_n$, with $a_\ell a_m\in E(G)-E(T)$. But no linear combination
of squares in $\Upsilon$ has such edges. Hence the spans have zero intersection,
so $\mathscr{B}$ is linearly independent.
\end{proof}

Our next task is to show that $\mathscr{B}$ is actually a basis for the square space.
In fact, we will show more: it is also a basis for $\ker(p^*)$, and $\mathscr{S}(G^{(k)})=\ker(p^*)$.
Our dimension counts will involve finding $|\Upsilon|$ and $|\Omega|$, and for this we
use the following formulas. The first is standard; both are easily verified with induction.

\begin{eqnarray}
\mbox{\large $\binom{r}{r}\,+\,\binom{r+1}{r}\;+\;\binom{r+2}{r}\;+\;\cdots\;+\;\,\binom{r+n}{r}$} &=&
\mbox{\large $\,\;\binom{r+n+1}{r+1}$}
\label{EQN:PASCAL1}\\
& & \nonumber \\
\mbox{\large $0 \binom{r}{r}+1\binom{r+1}{r}+2 \binom{r+2}{r}+\;\cdots\;+n \binom{r+n}{r}$} &=& 
\mbox{\large $n \binom{r+n+1}{r+1}-\binom{r+n+1}{r+2}\;\;\;\;\;$}
\label{EQN:PASCAL2}
\end{eqnarray}

\medskip

Take an edge $e_j$ of $T$ with $3\leq j$. From its definition,  $\Upsilon$ has
$(j-2)\binom{k+j-3}{k-2}$ squares of form $(e_i\Box e_j)f$. We reckon as follows, using
Equations~(\ref{EQN:PASCAL1}) and~(\ref{EQN:PASCAL2}) as appropriate.

\begin{eqnarray}
|\Upsilon| & = & \sum_{j=3}^{v}(j-2)\binom{k+j-3}{k-2}
\nonumber\\
& = & \sum_{j=1}^{v}(j-2)\binom{k+j-3}{k-2} +1
\nonumber\\
& = & \sum_{j=1}^{v}(j-1)\binom{k+j-3}{k-2}-\sum_{j=1}^{v}\binom{k+j-3}{k-2} +1
\nonumber\\
& = & (v-1)\binom{k+v-2}{k-1}-\binom{k+v-2}{k}-\binom{k+v-2}{k-1}+1
\nonumber\\
& = & (v-1)\binom{k+v-2}{k-1}-\binom{k+v-1}{k}+1.
\label{EQN:UPSILON}
\end{eqnarray}
Now, given and edge  $e_j$ of $T$ with $2\leq j$,  the set $\Omega$ has
$\beta(G)\binom{k+j-3}{k-2}$ squares of form $(a_\ell a_m\Box e_j)f$.
Consequently
\begin{eqnarray}
|\Omega| & = & \beta(G)\sum_{j=2}^{v}\binom{k+j-3}{k-2}
\nonumber\\
& = & \beta(G)\left(\sum_{j=1}^{v}\binom{k+j-3}{k-2}-1\right)
\nonumber\\
& = & \beta(G)\binom{k+v-2}{k-1}-\beta(G).
\label{EQN:OMEGA}
\end{eqnarray}

\begin{proposition}
The set $\mathscr{B}=\Upsilon\cup\Omega$ is a basis for the square space of the
reduced $k$th power of $G$.
Moreover,
the square space equals $\ker(p^*)$.
\label{PROP:KER}
\end{proposition}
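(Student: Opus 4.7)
The plan is to prove both assertions simultaneously by a dimension count. Observe first that we already have a chain of subspaces
$$\mathrm{span}(\mathscr{B})\;\subseteq\;\mathscr{S}(G^{(k)})\;\subseteq\;\ker(p^*)\;\subseteq\;\mathscr{C}(G^{(k)}),$$
because every element of $\mathscr{B}$ is by construction a Cartesian square, and any Cartesian square lies in $\ker(p^*)$ (as noted just after the definition of $\mathscr{S}(G^{(k)})$). Since Proposition~\ref{PROP:IND} gives $\dim\,\mathrm{span}(\mathscr{B})=|\mathscr{B}|=|\Upsilon|+|\Omega|$, it suffices to verify that $|\Upsilon|+|\Omega|=\dim\ker(p^*)$; this single identity forces all three inner subspaces to coincide and simultaneously exhibits $\mathscr{B}$ as a basis for each.

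To compute $\dim\ker(p^*)$, I would invoke Proposition~\ref{PROP:SUM}, which decomposes $\mathscr{C}(G^{(k)})=\mathscr{C}(Gf)\oplus\ker(p^*)$. Since $Gf\cong G$, we have $\dim\,\mathscr{C}(Gf)=\beta(G)=e-v+1$, and combining with the dimension formula~(\ref{EQN:DIMENSION_Gk}) yields
$$\dim\ker(p^*)\;=\;\beta(G^{(k)})-\beta(G)\;=\;e\binom{k+v-2}{k-1}-\binom{k+v-1}{k}-e+v.$$

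For $|\mathscr{B}|$, I would add the already-derived counts~(\ref{EQN:UPSILON}) and~(\ref{EQN:OMEGA}), substituting $\beta(G)=e-v+1$ in the latter. The coefficients of $\binom{k+v-2}{k-1}$ combine as $(v-1)+(e-v+1)=e$, and the constant/linear terms combine to $+1-(e-v+1)=-e+v$, giving exactly the same expression as $\dim\ker(p^*)$ above. Equality of dimensions then completes the proof.

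The main obstacle has already been surmounted in the earlier propositions: the nontrivial work was establishing linear independence of $\mathscr{B}$ (Proposition~\ref{PROP:IND}) and the direct-sum decomposition (Proposition~\ref{PROP:SUM}). What remains here is purely arithmetic verification that the counts match, and the proof reduces to a careful bookkeeping exercise using the Pascal-type identities~(\ref{EQN:PASCAL1}) and~(\ref{EQN:PASCAL2}) that were already invoked to obtain~(\ref{EQN:UPSILON}) and~(\ref{EQN:OMEGA}). No new combinatorial construction is needed, and the identity $\mathscr{S}(G^{(k)})=\ker(p^*)$ falls out as a consequence of the squeeze rather than being proved directly (which would otherwise require explicitly writing a general element of $\ker(p^*)$ as a sum of Cartesian squares).
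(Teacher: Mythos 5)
Your proposal is correct and follows essentially the same route as the paper: linear independence from Proposition~\ref{PROP:IND}, the containment $\mathrm{span}(\mathscr{B})\subseteq\mathscr{S}(G^{(k)})\subseteq\ker(p^*)$, and a dimension count showing $|\Upsilon|+|\Omega|=\beta(G^{(k)})-\beta(G)=\dim\ker(p^*)$. The only cosmetic difference is that you obtain $\dim\ker(p^*)$ from the direct-sum decomposition of Proposition~\ref{PROP:SUM} while the paper cites the rank theorem for the surjective map $p^*$; these are the same fact.
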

\begin{proof}
By Proposition~\ref{PROP:IND}, the set $\mathscr{B}$ is linearly independent;
and it is a subset of the square space by construction. We saw
earlier that the square space is a subspace $\ker(p^*)$.
To finish the proof we show that $\ker(p^*)$ has dimension $|\mathscr{B}|$.
By the rank theorem applied to the surjective map $p^*:\mathscr{C}(G^{(k)})\to \mathscr{C}(G)$
we have $\dim \ker(\varphi^*)= \beta(G^{(2)})-\beta(G)$.  This with
Equations~(\ref{EQN:DIMENSION_Gk}), (\ref{EQN:UPSILON}) and~(\ref{EQN:OMEGA}),
as well as the fact that $(v-1)+\beta(G)=e$, gives
\begin{eqnarray*}
|\mathscr{B}|& = &|\Upsilon|+|\Omega|\\
& = & (v-1)\binom{k+v-2}{k-1}-\binom{k+v-1}{k}+1+\beta(G)\binom{k+v-2}{k-1}-\beta(G)\\
& = &  e\binom{k+v-2}{k-1}-\binom{k+v-1}{k}+1-\beta(G)\\
& = &  \beta(G^{(2)})-\beta(G)\\
& = & \dim \ker(p^*) .
\end{eqnarray*}
Therefore $\mathscr{B}$ is a basis for both $\mathscr{S}(G^{(k)})$ and $\ker(p^*)$.
\end{proof}

If $k=2$, then $\mathscr{B}=\{ab\Box cd \mid ab, cd\in E(G)\}-\{ab\Box cd \mid ab, cd\in E(G)-E(T)\}$,
so $|\mathscr{B}|=\binom{e}{2}-\binom{\beta(G)}{2}$. It is interesting to note that if $\beta(G)\leq 1$, then 
$\binom{\beta(G)}{2}=0$ and $\mathscr{B}$ consists of all squares in the square space; in all other cases it has fewer squares.

We now can establish the main result of this section, namely a construction of an 
MCB for the reduced $k$th power. Take an $f\in M_{k-1}(G)$. Propositions~\ref{PROP:SUM}
and~\ref{PROP:KER} say
\begin{equation}
\mathscr{C}(G^{(k)})=\mathscr{C}(Gf)\,\bigoplus\, \mathscr{S}(G^{(k)}).
\label{EQN:DECOMP}
\end{equation}
To any cycle $C=c_1c_2\ldots c_n$ in $G$,
 there  corresponds  cycle $Cf=c_1f\, c_2f\ldots \,c_nf$ in 
$G^{(k)}$.

\begin{figure}[t!]
\centering
\includegraphics[width=\textwidth]{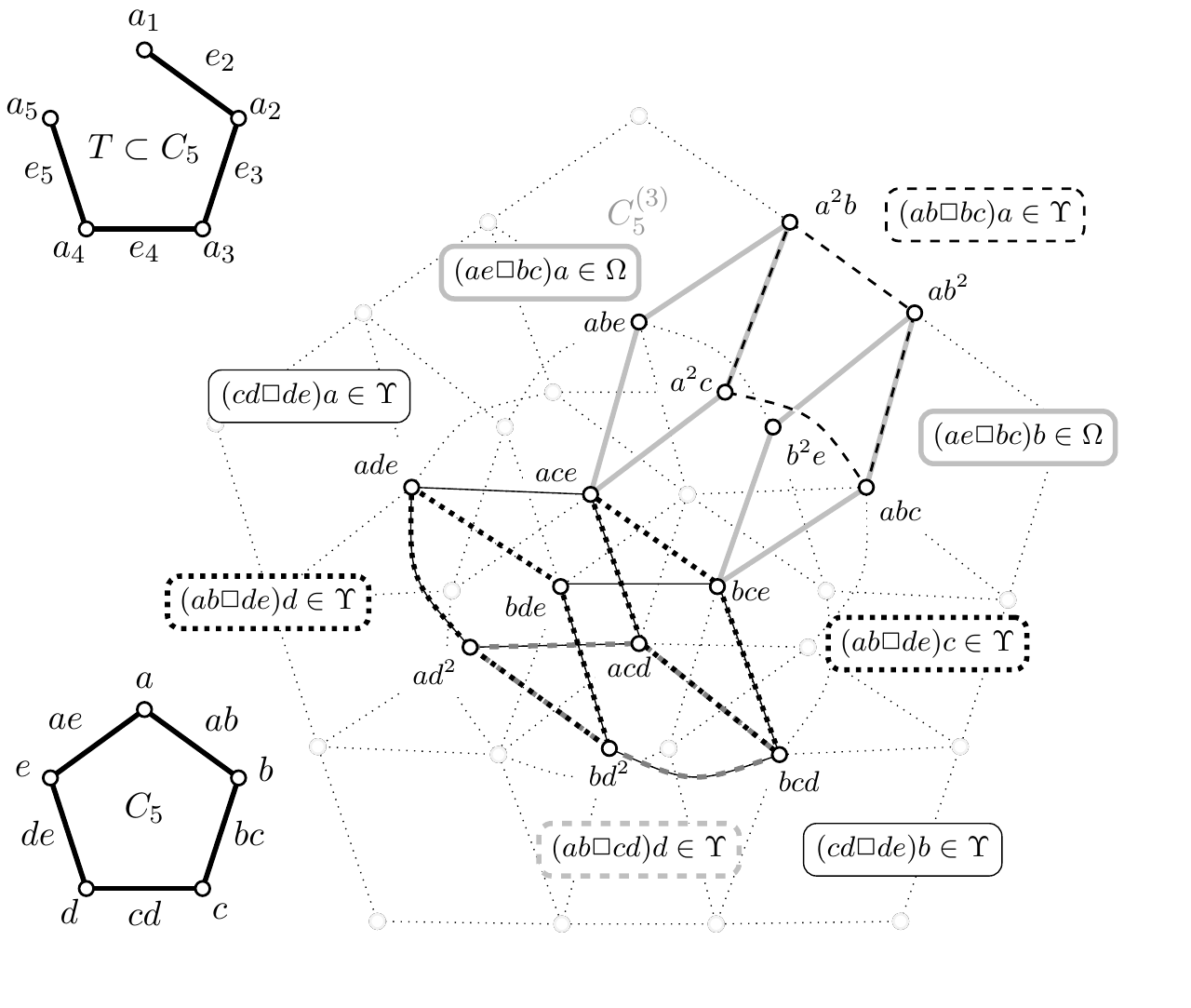}
\caption{With $T$ as indicated, the sets of squares $\Upsilon$ and $\Omega$ form  a basis
$\mathscr{B}=\Upsilon \cup \Omega$ of the square space of $C_5^{(3)}$.
Here
$\Upsilon=$ $\{ (ab\Box bc)f\mid f\in \{ a,b,c \} \} \cup\{ (ab\Box cd)f, (bc\Box cd)f\mid f\in \{ a,b,c,d \} \}$ $\cup \{ (ab\Box de)f, (bc\Box de)f, (cd\Box de) f \mid f\in \{ a,b,c,d,e \} \} $. 
Also $\Omega =$ $\{ (ae\Box ab)f \mid f\in \{ a,b \} \} \cup \{ (ae\Box bc)f \mid f\in \{ a,b,c \} \} \cup \{ (ae\Box cd)f \mid f\in \{ a,b,c,d \} \} \cup \{ (ae\Box de)f \mid f\in \{ a,b,c,d,e \} \}$.
Note $|\Upsilon|=24$ and $|\Omega|=14$.
The square  $(ab \Box cd) e \notin \mathscr{B}$  is the ``top square'' of the Cartesian cube $ab \Box cd \Box de$.}

\label{Fig:FiveCycle(3)Squares}
\end{figure}

\begin{theorem}
Take a cycle basis $\mathscr{C}=\{C_1,C_2,\ldots,C_{\beta(G)}\}$ for $G$, and let
$\mathscr{B}$ be the basis for  $\mathscr{S}(G^{(k)})$  constructed above. Fix
$f\in M_{k-1}(G)$ and put
$\mathscr{C}f=\{C_1f, C_2f,\ldots, C_{\beta(G)}f\}$.
Then $\mathscr{C}f\cup \mathscr{B}$ is a cycle basis for $G^{(k)}$. If $\mathscr{C}$ is an MCB for
$G$, and $G$ has no triangles, then this basis is an MCB for $G^{(k)}$. 
\label{THEOREM:1}
\end{theorem}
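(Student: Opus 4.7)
The plan is to handle the two assertions separately, leveraging the direct-sum decomposition $\mathscr{C}(G^{(k)})=\mathscr{C}(Gf)\oplus\mathscr{S}(G^{(k)})$ (Proposition~\ref{PROP:SUM} combined with Proposition~\ref{PROP:KER}) and the MCB criterion of Proposition~\ref{PROP:TEST}. For the first (basis) claim, I would observe that $\mathscr{B}$ is already known to be a basis of $\mathscr{S}(G^{(k)})$ by Proposition~\ref{PROP:KER}, and that the embedding $x\mapsto xf$ realizes $Gf$ as an isomorphic copy of $G$. This graph isomorphism induces an isomorphism of cycle spaces $\mathscr{C}(G)\to\mathscr{C}(Gf)$ sending $C_i\mapsto C_if$, so $\mathscr{C}f$ is a basis of $\mathscr{C}(Gf)$. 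Bases of summands concatenate to a basis of a direct sum, giving that $\mathscr{C}f\cup\mathscr{B}$ is a basis of $\mathscr{C}(G^{(k)})$.

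For the MCB assertion, my first step would be a \emph{girth lemma}: if $G$ is triangle-free then so is $G^{(k)}$. The argument is short: if $C$ were a $3$-cycle in $G^{(k)}$, then $p^*(C)\in\mathscr{C}(G)$ has at most three edges, so it is either $0$ or a triangle; the latter is forbidden by hypothesis, so $p^*(C)=0$, forcing $C\in\ker(p^*)=\mathscr{S}(G^{(k)})$. But $\mathscr{S}(G^{(k)})$ is spanned by Cartesian squares of length $4$, and every $\mathbb{F}_2$-sum of even-length subgraphs has even length, whereas $|C|=3$. Consequently every nonzero element of $\mathscr{C}(G^{(k)})$ has length at least $4$.

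The main step is then to verify the Proposition~\ref{PROP:TEST} criterion. Given $C\in\mathscr{C}(G^{(k)})$, I would decompose it uniquely as $C=C_{Gf}+C_S$ and set $D=p^*(C)=p^*(C_{Gf})\in\mathscr{C}(G)$. Since $p^*$ sends each edge of $G^{(k)}$ to a single edge of $G$ we have $|D|\leq|C|$, and since $p^*|_{\mathscr{C}(Gf)}$ is a length-preserving isomorphism onto $\mathscr{C}(G)$ (different edges of $Gf$ have distinct images), $|C_{Gf}|=|D|$. Using that $\mathscr{C}$ is an MCB of $G$, Proposition~\ref{PROP:TEST} lets me write $D=\sum_j C_{i_j}$ with $|C_{i_j}|\leq|D|\leq|C|$; lifting by $f$, $C_{Gf}=\sum_j C_{i_j}f$ with $|C_{i_j}f|=|C_{i_j}|\leq|C|$. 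For $C_S$, expand in $\mathscr{B}$: each basis square has length $4\leq|C|$ by the girth lemma. Concatenating realizes $C$ as a sum of elements of $\mathscr{C}f\cup\mathscr{B}$ each of length $\leq|C|$, which is exactly the Proposition~\ref{PROP:TEST} criterion. I expect the girth lemma to be the conceptual step; the rest is bookkeeping that follows cleanly from the direct-sum structure and the length behaviour of $p^*$ restricted to $\mathscr{C}(Gf)$.
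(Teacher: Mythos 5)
Your proof is correct and follows essentially the same route as the paper's: the direct-sum decomposition $\mathscr{C}(G^{(k)})=\mathscr{C}(Gf)\oplus\mathscr{S}(G^{(k)})$ gives the basis claim, and the MCB claim is verified via Proposition~\ref{PROP:TEST} by applying $p^*$, using that its restriction to $\mathscr{C}(Gf)$ is a length-preserving isomorphism, and noting that squares have length $4\leq |C|$. The only difference is that you supply an explicit argument (via $p^*$ and the parity of the square space) for the triangle-freeness of $G^{(k)}$, a point the paper simply asserts as immediate; that is a welcome but minor addition.
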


\begin{proof}
That this is a cycle basis follows immediately from Equation~(\ref{EQN:DECOMP}).

Now suppose $\mathscr{C}$ is an MCB for
$G$, and that $G$ has no triangles. It is immediate that $G^{(k)}$ has no triangles either.
The proof is finished by applying Proposition~\ref{PROP:TEST}.
Take any
$C\in\mathscr{C}(G^{(k)})$, and write it as
\[C=\sum_{i\in I} G_i+\sum_{j\in J} B_j\,,\]
where the $G_i$ are from $\mathscr{C}f$ and 
the $B_j$ are from $\mathscr{B}$.
According to Proposition~\ref{PROP:TEST}, it suffices to show that $C$
has at least as many edges as any term in this sum.
Certainly $C$ is not shorter than
any square $B_j$ (by the triangle-free assumption).
To see that it is not shorter than any $G_i$ in the sum,
apply $p^*$ to the above equation to get
\[p^*(C)=\sum_{i\in I} p^*(G_i)\,.\]
Because $p^*:\mathscr{C}(Gf)\to \mathscr{C}(G)$ is an isomorphism,
the terms $p^*(G_i)$ are part of an MCB for $G$, and thus
$|p^*(C)|\geq |p^*(G_i)|=|G_i|$ for each $i$, by Proposition~\ref{PROP:TEST}.
Also $|C|\geq |p^*(C)|$ (as some edges may cancel in the projection)
so  $|C|\geq |G_i|$.
\end{proof}

Although Theorem~\ref{THEOREM:1} gives a simple MCB for reduced powers of a graph that has
no triangles, the constructed basis is definitely {\it not} minimum if triangles are present. Several different phenomena
account for this. Consider the case $k=2$. First, if $G$ has triangles, then for each vertex $x$ of $G$, the second reduced power contains
a copy $Gx$ of $G$. These copies are pairwise edge-disjoint; an MCB would have to capitalize on triangles in each of these copies
at the expense of squares in the square space. Moreover, as Figure~\ref{Fig:Three} demonstrates, some of the squares
in the square space will actually be sums of two triangles. The figure also shows that for a triangle $\Delta=abc$ in $G$,
we do not get just the three triangles $\Delta a$, $\Delta b$ and $\Delta c$, but also a fourth triangle $ab\,bc\,ca$ not
belonging to any $Gx$. We do not delve into this problem here.

\section{Discussion}
\label{Sec:Discussion}

We have defined what appears to be a new construction,  the $k$th reduced power of a graph, $G^{(k)}$, and have presented a theorem for construction of minimal cycle bases of~$G^{(k)}$.  

When $G$ is the transition graph for a Markov chain, $G^{(k)}$
is  the transition graph for the configuration space of $k$ identical and indistinguishable $v$-state automata with transition graph $G$.  Symmetry of model composition allows for interactions among  stochastic automata, so long as the transition rates $q_{ij}$ for $i,j \in \{ 1,2, \cdots , v\}$, $i\neq j$ are constant or functions of the number of automata $n_\ell(t)$ in each state, $0 \leq n_\ell(t) \leq k$, $1 \leq \ell \leq v$.  $G^{(k)}$ does not pertain if transition rates depend on the state of any particular automaton, $X_n(t) \in \{ 1, 2, \cdots , v\}$, $n \in \{ 1,2, \cdots , k\}$, as this violates indistinguishability. 

For concreteness, consider a stochastic automata network composed of three identical automata, each with transition graph $C_5$ and generator matrix, 
\bne
Q = \left( \begin{array}{ccccc} 
\diamond & q_{ab}[\cdot] & 0 & 0& q_{ae}\\
q_{ba} & \diamond  & q_{bc} &0 & 0\\
 0 &  q_{cb}& \diamond  & q_{cd}& 0  \\
0 & 0  &  q_{dc} & \diamond& q_{de}\\
q_{ea}  & 0  &0  & q_{ed} &  \diamond \\
\end{array} \right) 
\label{EQ:Q5}
\ene
where $\diamond$'s indicate the values required for zero row sum, $q_{ii} = - \sum_{j\neq i} q_{ij} < 0 $, and  $q_{ab}[\cdot]$ indicates a functional transition rate that depends on the global state of the three automata.  Assume constant transition rates $q_{bc} = q_{cd} = q_{de}  = q_{ea}  = \mu>0$ and $q_{ba}=q_{cb}=q_{dc}=q_{ed}=q_{ae}=\nu>0$.  Further assume that 
the automata may influence one another through the state-dependent  transition rate,
\bne
q_{ab}[\cdot]  = \lambda+ \alpha \, (n_a[\cdot] - 1) + \beta \, n_b[\cdot]  + \gamma \, n_c[\cdot]  + \delta \, n_d[\cdot]  + \epsilon \, n_e[\cdot] ,
\label{EQ:QAB}
\ene
where $\alpha,\beta,\gamma,\delta, \epsilon \geq 0$ and $[\cdot]$ denotes the global state $a_1^{p_1}a_2^{p_2}\cdots a_v^{p_v}$ that is the functional transition rate's argument.  The transition rate $q_{ab}: M_k(a_1,a_2,\cdots, a_v) \rightarrow \mathbb{R}$ is a function of the global state via $n_\ell : M_k(a_1,a_2,\cdots, a_v) \rightarrow \mathbb{N}$ defined by $n_\ell[a_1^{p_1}a_2^{p_2}\cdots a_v^{p_v}]=p_\ell$.  The three automata are uncoupled when $\alpha,\beta,\gamma,\delta,\epsilon = 0$ because this eliminates the dependence of $q_{ab}[\cdot]$ on the global state.  

(In this model specification, coupling an isolated component automaton to itself is equivalent to absence of coupling.  Because $q_{ab}[\cdot]$ is the rate of an $a \rightarrow b$ transition, $q_{ab}[\cdot]$ is only relevant when the isolated automaton is in state $a$.  This functional transition rate has the property that $q_{ab}[a]=\lambda$ when $\alpha,\beta,\gamma,\delta, \epsilon > 0$ because $n_x[y] = 1$ for $x=y$ and 0~otherwise.) 

The transition matrix for the master Markov chain $Q^{(3)}$ is defined by the model specification in the previous paragraph.  For example, the transition rate from global state $ad^2$ to global state $abd$ is $q^{(3)}[ad^2,abd] =2\mu$ because $n_d[ad^2] =2$ and $q_{db}=\mu$ is not a function of the global state.  Other examples are $q^{(3)}[c^3,c^2d]=n_c[c^3] q_{cd}=3\nu$, $q[a^2c,a^2d]=n_c[a^2c]q_{cd} = \nu$,   
\bnea
q^{(3)}[abe,b^2e] &= & n_a[abe] q_{ab} [abe] \nonumber \\
&=&  \lambda+ \alpha \, (n_a[abe]-1) + \beta \, n_b[abe]  + \gamma \, n_c[abe] + \delta \, n_d[abe]  + \epsilon \, n_e[abe]  \nonumber \\
&=&   \lambda+ \beta  + \epsilon \nonumber \\
q^{(3)}[a^3,a^2b] &= &  n_a[a^3] q_{ab} [a^3]  \nonumber \\
&=& 3\left(  \lambda+ \alpha \, (n_a[a^3]-1) + \beta \, n_b[a^3]  + \gamma \, n_c[a^3] + \delta \, n_d[a^3]  + \epsilon \, n_e[a^3] \right) \nonumber \\
&=& 3\left(  \lambda+ 2\alpha  \right)    \nonumber \\
q^{(3)}[a^2c,abc] &= & n_a[a^2c] q_{ab}[a^2c] \nonumber \\
&=& 2\left(  \lambda+ \alpha \, (n_a[a^2c] -1)+ \beta \, n_b[a^2c]  + \gamma \, n_c[a^2c]  + \delta \, n_d[a^2c]  + \epsilon \, n_e[a^2c]  \right) \nonumber \\
&=& 2 \left(  \lambda+  \alpha + \gamma \right)   . \nonumber
\enea
This process of unpacking the model specification yields a  master  Markov chain with $\eta = \binom{k+v-1}{k} =  \binom{3+5-1}{3} = 35$ states.  The master Markov chain has 210 transition rates $q_{ij}>0$ corresponding (in pairs) to the $5\binom{3+5-2}{3-1}=105$ edges of the master transition graph $C_5^{(3)}$.

\begin{figure}[t]
\centering
\includegraphics[width=\textwidth]{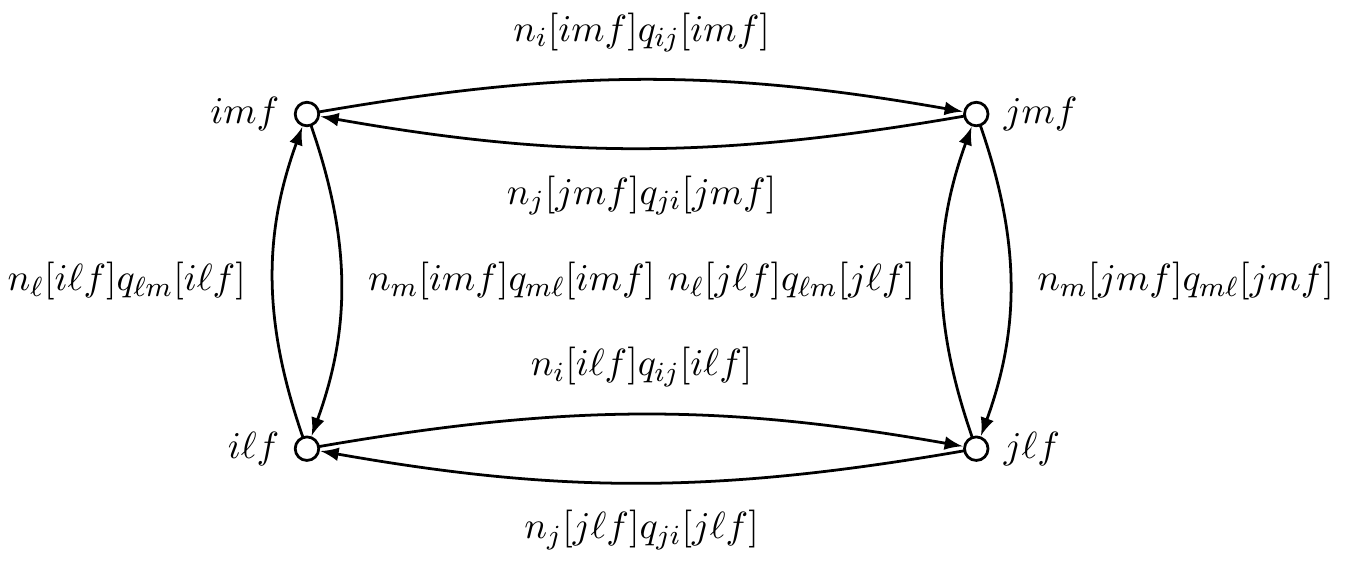}
\caption{Many cycles of the directed, weighted transition graph for a master Markov chain for $k$ coupled $v$-state automata correspond to Cartesian squares $(ij\Box \ell m )f$ of the minimal cycle basis for  the undirected, unweighted transition graph $G^{(k)}$, where $i,j,\ell,m \in \{ a_1,a_2,\cdots, a_v \}$ and $f \in M_{k-2}(a_1,a_2,\cdots, a_v)$.}
\label{FIG:SQUAREKOMOLGOROV}
\end{figure}

The construction of 
minimal cycle bases of $G^{(k)}$ provided by Theorem~\ref{THEOREM:1} is especially relevant to stochastic automata networks that arise in physical chemistry and biophysics \cite{Hill89}.  For many  applications in these domains, the principle of microscopic reversibility requires that the stationary distribution of {\it uncoupled} automata satisfying global balance,   
$\bar{\bpi} Q = \bzero$ subject to $\sum_i \bar{\pi}_i = 1$, also satisfies a stronger condition known as detailed balance, 
\[
\bar{\pi}_i \sum_{i \neq j} q_{ij} = \sum_{j \neq i} q_{ji} \bar{\pi}_j    .
\]
In other words, nonequilibrium steady states are forbidden.  Markov chains have this property when the transition rates satisfy the Kolmogorov criterion, namely, equality of the product of rate constants in both directions around any cycle in the transition matrix $Q$   \cite{Kelly11}. For an isolated automaton with transition graph $C_5$ and transition matrix (\ref{EQ:Q5}), the Komologorov criterion is 
\bne
q_{ab} [a]\,  q_{bc} \, q_{cd} \,  q_{de} \, q_{ea} = q_{ae} \,  q_{ed} \, q_{dc} \,  q_{cb} \,  q_{ba}  .
\label{KomolgorovB}
\ene
Substituting the transition rates of the model specification, both those that are constant as well as $q_{ab}[a]=\lambda$ (\ref{EQ:QAB}), yields 
the following condition on model parameters,
\bne
\lambda  \mu^4   = \nu^5 ,
\label{KomolgorovC}
\ene
that ensures the stationary distribution of an isolated automaton will satisfy detailed balance. 

By constructing the minimal cycle basis of  $C_5^{(3)}$, we may verify that the master Markov chain for three {\em uncoupled} automata, each with transition graph $C_5$, also exhibits microscopic reversibility under the same parameter constraints.   

To see this, recall that the minimal cycle basis of $C_5^{(3)}$ has 39 linearly independent cycles.  Microscopic reversibility for the master Markov chain for three uncoupled  automata 
requires that, given (\ref{KomolgorovC}) and $\alpha,\beta,\gamma,\delta,\epsilon=0$,  39  Komolgorov criteria are satisfied, each corresponding to  a $C_i$ in the MCB for $C_5^{(3)}$. 

One cycle in the MCB for $C_5^{(3)}$ takes the form $C_5 f$ for fixed  $f\in M_2(a,b,c,d,e)$. 
The Kolmogorov criterion for this cycle is 
\bnea
&& n_a[af] q_{ab}[af] \cdot n_b[bf] q_{bc}[bf] \cdot n_c[cf]q_{cd}[cf] \cdot n_d[df]q_{de}[df] \cdot n_e[ef]q_{ea}[ef] \nonumber \\
&& =  n_a[af]q_{ae}[af] \cdot n_e[ef]q_{ed}[ef] \cdot n_d[df]q_{dc}[df] \cdot n_c[cf]q_{cb}[cf] \cdot n_b[bf]q_{ba}[bf]  \nonumber  ,
\enea
where, for typographical efficiency, here and below, we drop the superscripted $(3)$ on the transition rates $q^{(3)}[\cdot,\cdot]$ of $Q^{(3)}$.  Canceling identical terms of the form $n_x[xf]$ gives 
\[
q_{ab}[af] \cdot q_{bc}[bf] \cdot q_{cd}[cf] \cdot q_{de}[df] \cdot q_{ea}[ef] 
=  q_{ae}[af] \cdot q_{ed}[ef] \cdot q_{dc}[df] \cdot q_{cb}[cf] \cdot q_{ba}[bf] . \]
When this expression is evaluated, the result is another instance of  (\ref{KomolgorovC}), which is satisfied by assumption. 

The remaining 38 $C_i$ in the MCB for $C_5^{(3)}$ are Cartesian squares (see Figure~\ref{FIG:SQUAREKOMOLGOROV}) that yield Kolmogorov criteria of the form,
\bnea 
&& n_i[imf] q_{ij} [imf]  \cdot n_m[jmf] q_{m\ell} [jmf] \cdot n_j[j\ell f] q_{ji} [j\ell f] \cdot n_\ell [i\ell f] q_{\ell m} [i\ell f]  \nonumber \\
&& = n_m[imf] q_{m\ell} [imf]  \cdot n_i[i\ell f] q_{ij} [i\ell f]  \cdot n_\ell[j\ell f] q_{\ell m} [j\ell f]  \cdot n_j[jmf] q_{ji} [jmf] , \nonumber 
\enea 
where $f\in M_1(a,b,c,d,e)$. 
For $x\neq y$,  $n_x [xyf] = n_x [x] + n_x [y] + n_x [f] =1 + n_x [f]$, so~this criterion simplifies to
\bnea 
&& \hspace{-0.3in} (1+n_i[f]) q_{ij} [imf]  \cdot (1+ n_m[f])q_{m\ell} [jmf] \cdot (1+n_j[f]) q_{ji} [j\ell f] \cdot (1+ n_\ell [f]) q_{\ell m} [i\ell f]  \nonumber \\
&& \hspace{-0.3in}  = (1+n_m[f] ) q_{m\ell} [imf]  \cdot (1+n_i[f]) q_{ij} [i\ell f]  \cdot (1+ n_\ell[f]) q_{\ell m} [j\ell f]  \cdot (1+ n_j[f]) q_{ji} [jmf] . \nonumber 
\enea 
Canceling identical terms of the form $(1+n_x[f])$ gives 
\bne
 q_{ij} [imf]  \, q_{m\ell} [jmf] \,  q_{ji} [j\ell f] \, q_{\ell m} [i\ell f]  =  q_{m\ell} [imf] \,  q_{ij} [i\ell f]  \,  q_{\ell m} [j\ell f]  \,q_{ji} [jmf]  
\label{KomolgorovE}
\ene
for $(ij \Box \ell m )f \in \mathscr{B}  = \Upsilon \cup \Omega$ with $f \in M_{1} (a_1,a_2,\ldots,a_v)$.
When the  automata are not coupled, $\alpha,\beta,\gamma,\delta,\epsilon=0$, the transition rates are not functions of the global state, and every factor on the left hand side has an equal partner on the right. 
Consequently, the 38 squares of $\mathscr{B}$  correspond to cycles in $Q^{(3)}$ that satisfy Komolgorov criteria.   

We have shown that every cycle in the MCB for $C_5^{(3)}$, given by $C_5 a \cup \mathscr{B}$, corresponds to a cycle in $Q^{(3)}$ that satisfies a Komolgorov criterion.   For every cycle in $Q^{(3)}$, there is a representative in the cycle space  $\mathscr{C}(C_5^{(3)})$ that is a linear combination (over the field $\mathbb{F}_2$) of elements of the MCB.  It follows that 
every cycle in the master Markov chain satisfies the Komolgorov criterion.  Thus, we conclude that 
the master Markov chain for three {\em uncoupled} automata
exhibits microscopic reversibility provided an isolated automaton has this property.  This property is expected, and yet important for  model verification. 

In many applications, it is important to establish whether or not model composition  (i.e., the process of coupling the automata) results in a master Markov chain with nonequilibrium steady states, in spite of the fact that an isolated component automaton satisfies detailed balance.   Such nonequilibrium steady states may be objects of study or, alternatively, the question may be relevant  because the master Markov chain is not physically meaningful when model composition introduces the possibility of nonequilibrium steady states \cite{Hill89}.

Our construction of 
minimal cycle bases of reduced graph powers provides conditions sufficient to ensure that model composition does not introduce nonequilibrium steady states.  
In general, it is sufficient that  (\ref{KomolgorovE}) hold of every Cartesian square 
$(ij\Box \ell m )f$ of the MCB for  the undirected, unweighted transition graph $G^{(k)}$.   In the example under discussion, many of these Komolgorov criteria do not involve the functional transition rate $q_{ab}[\cdot]$; these conditions are satisfied without placing any constraints on the coupling parameters $\alpha,\beta,\gamma,\delta,\epsilon$. 
The remaining constraints take the form 
\bne
q_{ab} [amf]  \, q_{m\ell} [bmf] \,  q_{ba} [b\ell f] \, q_{\ell m} [a\ell f] =  q_{m\ell} [amf] \,  q_{ab} [a\ell f]  \,  q_{\ell m} [b\ell f]  \,q_{ba} [bmf] 
\label{KomolgorovF}
\ene 
for $\ell m \in \{ bc,cd,de, ae\}$.   The Cartesian squares of concern are elements of the set $\{ (ab\Box \ell m )f \mid \ell m \in  \{ bc,cd,de\} \} \subset \Upsilon$  and $(ae\Box ab )f \in \Omega$.
Note that  $\ell m \neq ab$ and, consequently, $q_{m\ell} [bmf]  = q_{m\ell} [amf]$, $q_{\ell m} [a\ell f] = q_{\ell m} [b\ell f] $ and 
$q_{ba} [b\ell f] =  q_{ba} [bmf]=\nu$.  Thus, (\ref{KomolgorovF}) simplifies to 
\bne
q_{ab} [a\ell f]  = q_{ab} [amf]   \quad\quad \ell m \in \{ bc,cd,de, ae\}.
\label{KomolgorovG}
\ene 
To see how this requirement constrains the coupling parameters $\alpha,\beta,\gamma,\delta,\epsilon$, we expand both sides of (\ref{KomolgorovG}) using (\ref{EQ:QAB}), for example, 
\bnea
q_{ab} [a\ell f]  &= &\lambda + \alpha (n_a[a\ell f]-1) + \beta n_b[a\ell f] + \gamma n_c[a\ell f]  + \delta n_d[a\ell f]   + \epsilon n_e[a\ell f]   \nonumber \\
&= &\lambda + \alpha n_a[\ell f] + \beta n_b[\ell f] + \gamma n_c[\ell f]  + \delta n_d[\ell f]   + \epsilon n_e[\ell f]   \nonumber
\enea
where we used $n_a[a\ell f]=1 + n_a[\ell f]$.
Subtracting both sides of  (\ref{KomolgorovG}) by $\lambda + \alpha n_a[f]  + \beta n_b[f] + \gamma n_c[f]  + \delta n_d[f]   + \epsilon n_e[f] $  and using $n_x[\ell f]=n_x[\ell] + n_x[f]$ we obtain
\[
\alpha n_a[\ell] + \beta n_b[\ell] + \gamma n_c[\ell]    + \delta n_d[\ell]   + \epsilon n_e[\ell]   =
 \alpha n_a[m]  + \beta n_b[m] + \gamma n_c[m]  + \delta n_d[m]   + \epsilon n_e[m] 
\]
for $ \ell m \in \{ bc,cd,de, ae\}$.
These four equations yield four parameter constraints that ensure detailed balance in the master Markov chain for the three coupled stochastic automata, for example, $\ell m = bc$ gives 
\[
\alpha n_a[b] + \beta n_b[b] + \gamma n_c[b]    + \delta n_d[b]   + \epsilon n_e[b]   =
 \alpha n_a[c]  + \beta n_b[c] + \gamma n_c[c]  + \delta n_d[c]   + \epsilon n_e[c] , 
\]
which implies that $\beta = \gamma$. 
Substituting $\ell m = cd$, $de$ and $ae$, we find $\gamma=\delta$, $\delta=\epsilon$ and $\alpha=\epsilon$, respectively.
We conclude that $\alpha=\beta=\gamma=\delta=\epsilon$. 

In our example, the three automata are coupled when one or more of $\alpha,\beta,\gamma,\delta,\epsilon$ is positive. 
The analysis of Cartesian squares in the MCB for $C_5^{(3)}$ shows that coupling the three automata in the manner specified by (\ref{EQ:QAB}) {\em will} introduce nonequilibrium steady states unless the coupling parameters are equal.  This result is intuitive because $\sum_i n_i[\cdot] = k = 3$ and, consequently, equal  coupling parameters  $\alpha=\beta=\gamma=\delta=\epsilon$ correspond to a functional transition rate that, for every global state, evaluates to the constant  $q_{ab}[\cdot]=\lambda+\alpha(k-1)=\lambda+2\alpha$.

The simplicity of this parameter constraint is a consequence of evaluating (\ref{KomolgorovE}) in the context of the example model specification.  In general, the resulting constraints may be more complex and less restrictive.  Any choice of model parameters that  simultaneously satisfies 
\[
 q_{ij} [imf]  \, q_{m\ell} [jmf] \,  q_{ji} [j\ell f] \, q_{\ell m} [i\ell f]  =  q_{m\ell} [imf] \,  q_{ij} [i\ell f]  \,  q_{\ell m} [j\ell f]  \,q_{ji} [jmf]  
\]
for $(ij \Box \ell m )f \in \mathscr{B}  = \Upsilon \cup \Omega$ with $f \in M_{k-2} (a_1,a_2,\ldots,a_v)$
are conditions sufficient  to ensure that the process of model composition (i.e., coupling $k$ identical and indistinguishable $v$-state automata) does not introduce a violation of microscopic reversibility. 

\section*{Acknowledgments}

The work was supported  by National Science Foundation Grant DMS 1121606.  GDS acknowledges a number of stimulating conversations with William \& Mary students enrolled in Spring 2015 
 {\it Mathematical Physiology} and Professor Peter Kemper.

\bibliography{richard_rpg,math,smith_rpg}
\bibliographystyle{unsrt}

\end{document}